\newtheorem{theorem}{Theorem}[section]
\newtheorem{lemma}[theorem]{Lemma}
\newtheorem{corollary}[theorem]{Corollary}
\newtheorem{example}[theorem]{Example}
\newtheorem{proposition}[theorem]{Proposition}
\newcommand{\ul}{\underline}
\def\D{{\mathcal D}}
\newcommand\trop{\mathbb{T}}
\newcommand\ft{\mathbb{FT}}
\newcommand\utnt{UT_n(\trop)}
\newcommand\uttt{UT_2(\trop)}
\begin{document}
\title[Identities in Tropical Matrix Semigroups]{Identities in Upper Triangular Tropical Matrix Semigroups and the Bicyclic Monoid}

\maketitle

\begin{center}

LAURE DAVIAUD\footnote{
Faculty of Mathematics, Informatics and Mechanics, University of Warsaw, Poland. Email \texttt{ldaviaud@mimuw.edu.pl}. Work supported by the LIPA project, funded by the European Research Council (ERC) under the European Union's Horizon 2020 research and innovation programme (grant agreement
No 683080).},
MARIANNE JOHNSON\footnote{School of Mathematics, University of Manchester,
Manchester M13 9PL, UK. Email \texttt{Marianne.Johnson@maths.manchester.ac.uk}.}
and
MARK KAMBITES\footnote{School of Mathematics, University of Manchester,
Manchester M13 9PL, UK. Email \texttt{Mark.Kambites@manchester.ac.uk}.}

\date{\today}
\keywords{}
\thanks{}
\end{center}

\begin{abstract} We establish necessary and sufficient conditions for a semigroup identity to hold in the monoid of $n \times n$ upper triangular tropical matrices, in terms of equivalence of certain tropical polynomials. This leads to an algorithm for checking whether such an identity holds, in time polynomial in the length of the identity and size of the alphabet. It also allows us to answer a question of Izhakian and Margolis, by showing that the identities which hold in the monoid of $2 \times 2$ upper triangular tropical matrices are exactly the same as those which hold in the bicyclic monoid. Our results extend to a broader class of ``chain structured tropical matrix semigroups''; we exhibit a faithful representation of the free monogenic inverse semigroup within such a semigroup, which leads also to a representation by $3 \times 3$ upper triangular tropical matrices.
\end{abstract}

\vspace{1ex}
\noindent \textit{This amended version of the author accepted manuscript contains a corrected proof of Proposition 7.1. The error in the original published proof is described below the new proof. The new proof establishes the proposition exactly as originally published, all other results are unaffected and the manuscript is otherwise unamended. We are grateful to Duarte Ribeiro (private communication) for alerting us to the error.}

\section{Introduction}

Over the last few years there has been considerable interest in the structure of the monoid $M_n(\trop)$ of tropical (max-plus) matrices under multiplication, with lines of investigation including characterisations of Green's relations \cite{G98,HK12,JK11,JK13}, structural properties of subsemigroups and maximal subgroups \cite{dAP03, G96, IJK17}, as well as connections between the algebraic properties of elements (or indeed subsemigroups) and their actions upon tropically convex sets \cite{IJK16,M10,SLS13}.

A natural question, especially in view of d'Alessandro and Pasku's proof that finitely generated subsemigroups of $M_n(\trop)$ have polynomial growth \cite{dAP03}, is whether $M_n(\trop)$ satisfies a nontrivial semigroup identity. Izhakian and Margolis \cite{IM10} were the first to consider this question, in the $2 \times 2$ case; they showed that $M_2(\trop)$ does indeed satisfy a nontrivial identity. A key step of their proof was establishing an identity for the upper triangular submonoid $UT_2(\trop)$, which is also of interest in its own right. Specifically, they showed that $UT_2(\trop)$ satisfies the celebrated identity $AB^2A^2BAB^2A=AB^2ABA^2B^2A$, which was shown by Adjan \cite{A66} to hold in the bicyclic monoid $\mathcal{B}$. Since $\mathcal{B}$ embeds in $UT_2(\trop)$, every identity satisfied in the latter must also hold in the former. In view of this and their results, Izhakian and Margolis posed the natural question of whether the converse holds, that is, whether $UT_2(\trop)$ and $\mathcal{B}$ satisfy exactly the same identities, or equivalently (by Birkhoff's HSP theorem \cite{B35}), whether they generate the same variety.

In Section~\ref{sec_2x2identities} we develop an exact characterisation of identities which hold in $UT_2(\trop)$, by associating $k$ tropical polynomial equations in $k$ variables to each identity on $k$ letters. This gives a simple and algorithmically efficient (as we shall see in Section \ref{sec_complexity}) method to check whether any given identity holds. In Section~\ref{sec_bicyclic}, by considering the embedding of $\mathcal{B}$ into $UT_2(\trop)$, we use this to show that an identity which fails to hold in $UT_2(\trop)$ must also fail in $\mathcal{B}$, thus answering the above-mentioned question of Izhakian and Margolis \cite{IM10} and establishing that the bicyclic monoid $\mathcal{B}$ and the upper triangular tropical matrix monoid $UT_2(\trop)$ satisfy exactly the same semigroup identities (Theorem \ref{main_theorem}).

It follows by Birkhoff's HSP Theorem, of course, that $\mathcal{B}$ and $UT_2(\trop)$ generate the same variety, and hence share all properties of monoids which are visible in the variety generated. For example, by a theorem of Shneerson \cite{Shn89}, $\mathcal{B}$ has infinite axiomatic rank, so $UT_2(\trop)$ must also have.
In particular, since infinite axiomatic rank implies the non-existence of a finite basis, from our theorem plus \cite{Shn89} we can deduce the recent result of Chen, Hu, Luo and Sapir \cite{CHLS16} that the variety generated by $UT_2(\trop)$ shares with that generated by $\mathcal{B}$ the property of not admitting a finite basis of identities.

There is an interesting relationship between these results and work of Pastijn \cite{P06}, which studies identities in the bicyclic monoid by reduction to checking linear inequalities or, equivalently, properties
of certain polyhedral complexes. Once we have established that identities in $\mathcal{B}$ are equivalent to identities in $\uttt$, it becomes evident that our methods (for identities in $\uttt$) are related to Pastijn's (for identities in $\mathcal{B}$). However, since our methods are used to establish the correlation in the first place, we cannot hope to deduce our results from Pastijn's work.

In higher dimensions, understanding identities seems to be hard. Izhakian and Margolis have conjectured that $M_n(\trop)$ satisfies an identity for every $n$; evidence supporting this includes their own results on $M_2(\trop)$, and the proof by d'Alessandro and Pasku that finitely generated subsemigroups of $M_n(\trop)$ all have polynomial growth. More recently Shitov \cite{Shi14} has produced an identity for $M_3(\trop)$, over a $2$-letter alphabet and with 1,795,308 letters on each side. Identities satisfied by semigroups of tropical matrices satisfying non-singularity conditions were considered in \cite{I16, Shi14}. In \cite{I14} Izhakian claimed a family of identities for $UT_n(\trop)$ for each $n$; in fact the proof contains a technical error and at least some of the claimed identities do not hold, but they can be corrected \cite{I14Erratum,TaylorThesis} to give examples of identities which do hold in these semigroups. Further examples of identities for the upper triangular case have been constructed by Okninski \cite{O15} and by Cain \textit{et al} \cite{Cain17}; the latter work provides an additional motivation for studying upper triangular tropical matrices by exhibiting an embedding of the \textit{plactic monoid} of rank $3$  into the direct product $UT_3(\trop) \times UT_3(\trop)$.

In Section~\ref{sec_chain} we extend the ideas of Section~\ref{sec_2x2identities} to characterise identities which hold in $UT_n(\trop)$, and indeed in a more general class of \textit{chain structured tropical matrix semigroups} (to be defined below), in terms of equivalence of certain tropical polynomial functions. Section~\ref{sec_divisors} establishes some structural results about chain structured tropical matrix semigroups.
Section~\ref{sec_monogenicinverse} considers the free monogenic inverse semigroup $\mathcal{I}$; we exhibit a natural embedding of $\mathcal{I}$ into a chain structured tropical matrix semigroup which in turn embeds in $UT_3(\trop)$. By combining with the results of Section~\ref{sec_chain} we recover an alternative proof that $\mathcal{I}$ satisfies the same identities as $UT_2(\trop)$ (which via Theorem \ref{main_theorem}, is equivalent to the known fact that it satisfies the same identities as the bicyclic monoid).

Finally, Section~\ref{sec_complexity} applies the results of Sections~\ref{sec_2x2identities} and \ref{sec_chain} to the algorithmic problem of deciding whether a given identity holds in $UT_n(\trop)$. These results allow us to reduce the problem to (real) linear programming, and hence to show that, for a fixed $n$, it can be solved in time polynomial in the size of the identity and the alphabet over which it is defined. In particular, by the results of Section \ref{sec_bicyclic}, this gives a polynomial-time algorithm to check whether a given identity holds in the bicyclic monoid. (The work of Pastijn \cite{P06} also yields an algorithm for this problem in the bicyclic monoid, which also amounts to a reduction to linear programming; although Pastijn does not analyse the complexity of his algorithm, we believe it is similar to ours.)

Semigroups of tropical matrices are closely related to \textit{max-plus automata}, a computational model defined as a quantitative extension of finite automata. (\textit{Weighted automata} more generally were introduced by Sch{\"u}tzen-berger in \cite{Schutz61}; these machines perform an automatic computation on words over a finite alphabet, associating with each word a value or `weight' in a given semiring.) Semigroups of tropical matrices and max-plus automata provide two perspectives on the same underlying mathematical structures; the questions raised about them by algebraists and computer scientists are often different, but the study of identities is natural from both points of view; from a computational perspective it addresses the question of which pairs of distinct inputs can be distinguished by a computational model. This common ground has fruitfully been explored in the case of finite (unweighted) automata, leading to the equational and topological development of profinite theory \cite{Reiterman82, GGP08}.  In a companion article \cite{DJ17} the first two authors investigate the computational power of two-state max-plus automata, and use the ideas developed there to construct minimal length identities for $M_2(\trop)$.  A full understanding of all identities in $M_n(\trop)$ for $n \geq 3$ seems still quite distant.

\section{Preliminaries}
\label{sec_defs}

\subsection{Tropical polynomials}
\label{subsec_poly}
The \emph{tropical semiring} $\trop$ is the commutative idempotent semiring whose elements are drawn from the set $\mathbb{R} \cup \{-\infty\}$, and whose binary operations are defined by $a \oplus b = {\rm max} (a,b)$ and $a \otimes b = a+b$, where $-\infty$ should be thought of as the ``zero'' element of the semiring, satisfying $-\infty \oplus a = a$ and $-\infty \otimes a = -\infty$ for all $a \in \trop$. Often it will be convenient to work without this zero element; in order to distinguish this algebraic structure from the field $\mathbb{R}$ we shall write $\ft$ to denote the ``finitary tropical semiring'', namely the set $\mathbb{R}$, under the operations $\oplus$ and $\otimes$. There is an obvious total order on $\mathbb{T}$ in which $-\infty$ is the least element. We denote the supremum of a bounded-above set of elements $Y \subset \mathbb{T}$ by $\bigoplus_{y \in Y} y$.

By a \textit{formal tropical polynomial} in variables from a (countable) set $X$ we mean an element of the commutative polynomial semiring $\mathbb{T}[X]$, that is, a finite formal sum in which each term is a formal product of a coefficient from $\mathbb{FT}$ and formal powers of finitely many of the variables of $X$, considered up to the commutative and distributive laws and the idempotent addition in $\mathbb{T}$.  The order on $\mathbb{T}$ induces a partial order on $\mathbb{T}[X]$, in which $g \preceq f$ if and only if each term of $g$ appears as a term of $f$ with coefficient greater than or equal to the corresponding coefficient in $g$. A set of polynomials is \textit{bounded} if it is bounded above in this order; any bounded set of polynomials has a supremum which is a polynomial. We note that evaluation at any fixed values of the variables gives a supremum-preserving semiring morphism from $\trop[X]$ to $\trop$.

Each formal tropical polynomial naturally defines a function from $\mathbb{T}^X$ to $\mathbb{T}$, by interpreting all formal products and formal sums tropically. Unlike with classical polynomials over an infinite field, however, two distinct formal tropical polynomials may define the same function. For example, $x^{2} \oplus x \oplus 1$ and $x^{2} \oplus 1$ are distinct formal polynomials but define the same function because $x$ can never exceed both $x^{2}$ and $1$, as illustrated in Figure 1 below.

\begin{figure}[h!]
\includegraphics[width=0.6\textwidth]{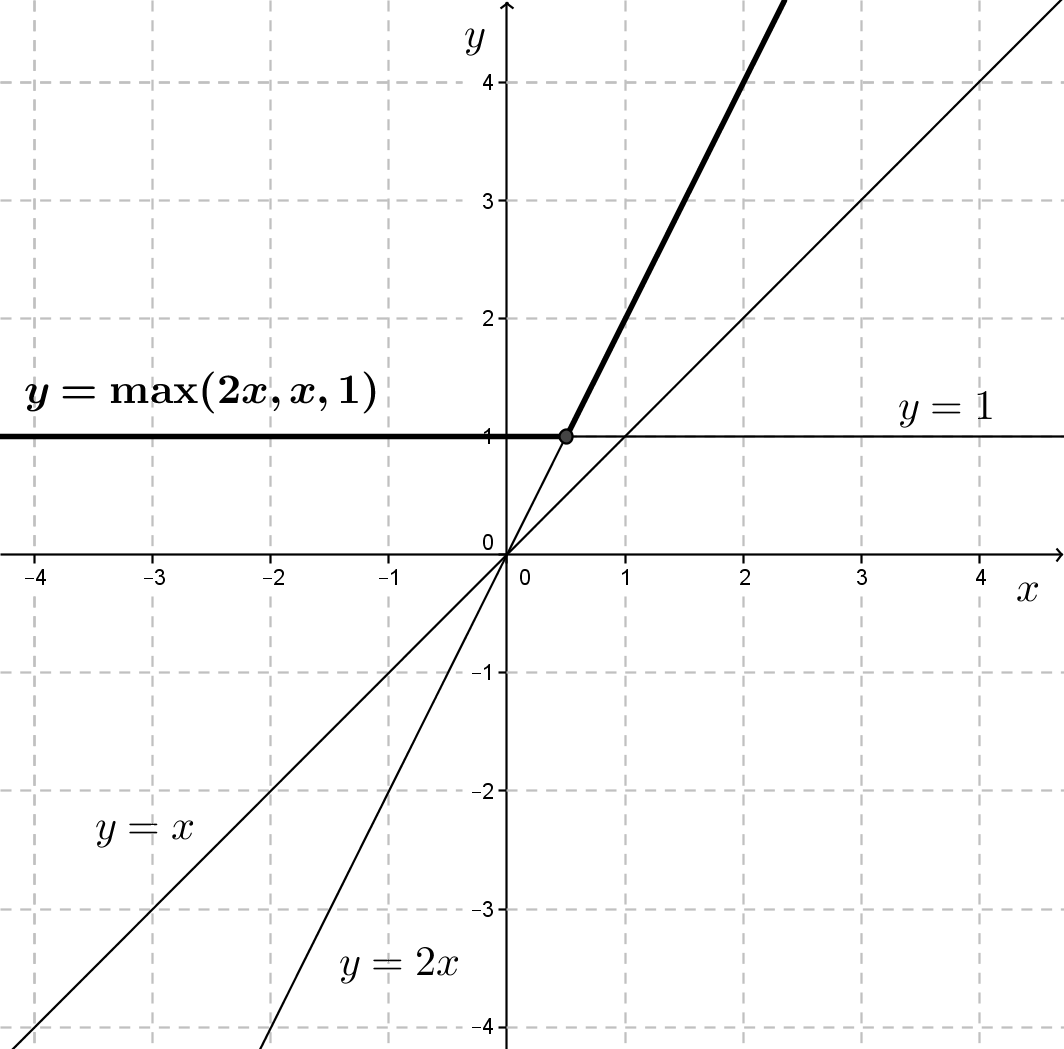}
\caption{The formal polynomial $x^2 \oplus x \oplus 1$ defines the function $\mathbb{T} \rightarrow \mathbb{T}, \ x \mapsto {\rm max}(2x, x, 1)$. Since $x$ cannot simultaneously exceed $2x$ and $1$, this is the same as the function $\mathbb{T} \rightarrow \mathbb{T}, \ x \mapsto {\rm max}(2x, 1)$ which corresponds to the formal polynomial $x^2 \oplus 1$.}
\end{figure}

We say that two formal polynomials are \textit{equivalent} if they represent the same function. We note that it is easy to see that two tropical polynomials $f, g: \mathbb{T}^X \rightarrow \mathbb{T}$ will be equivalent if and only if they agree on $\mathbb{FT}^X$.

We say that a tropical polynomial $f$ is \textit{$0$-flat} if it is a sum of products of variables (without coefficients). An elementary but important property of $0$-flat polynomials is the following relationship with classical scaling:
\begin{lemma}\label{lem:harmony}
If $f \in \mathbb{T}[x_1, \dots, x_k]$ is $0$-flat and $\lambda \in \mathbb{R}$ is non-negative then $f(\lambda x_1, \dots, \lambda x_k) = \lambda f(x_1, \dots, x_k)$ for all evaluations of $x_1, \dots, x_k$ in $\mathbb{T}$.
\end{lemma}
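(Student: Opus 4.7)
The plan is simple: reduce to a term-by-term statement and exploit that, without coefficients, each monomial becomes classically a $\mathbb{Z}_{\geq 0}$-linear (and hence positively homogeneous) function of its arguments, so that the result follows from the well-known fact that non-negative scaling commutes with $\max$. Concretely, I would write $f = \bigoplus_{i=1}^r m_i$, where each $m_i = \bigotimes_{j=1}^k x_j^{\otimes a_{ij}}$ is a formal product of variables with non-negative integer exponents $a_{ij}$. Evaluating tropically, $m_i$ computes the classical expression $\sum_{j=1}^k a_{ij} x_j$, which is $\mathbb{R}$-linear in $(x_1, \dots, x_k)$ and so satisfies $m_i(\lambda x_1, \dots, \lambda x_k) = \lambda\, m_i(x_1, \dots, x_k)$ for every $\lambda \in \mathbb{R}$. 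Taking $\bigoplus_i$ of both sides, and then using that $\lambda \geq 0$ can be pulled through a $\max$, yields
\[
f(\lambda x_1, \dots, \lambda x_k) \;=\; \bigoplus_{i=1}^r m_i(\lambda x_1, \dots, \lambda x_k) \;=\; \bigoplus_{i=1}^r \lambda\, m_i(x_1, \dots, x_k) \;=\; \lambda\, f(x_1, \dots, x_k).
\]

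Two minor edge cases deserve a line of comment. First, when some $x_j = -\infty$: the convention that $-\infty$ is absorbing under tropical multiplication by any scalar means both $m_i(\lambda x)$ and $\lambda\, m_i(x)$ equal $-\infty$ exactly when a variable appearing in $m_i$ takes the value $-\infty$, so the per-term identity is preserved. Second, when $\lambda = 0$: the claim reduces to $f(0, \dots, 0) = 0$, which holds because each $m_i$ has no coefficient and only variable factors, so $m_i(0, \dots, 0) = 0$, and the maximum over $i$ is then also $0$.

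I do not foresee any genuine obstacle. The whole content of the lemma is the observation that the $0$-flat hypothesis is exactly what rules out additive (classical) offsets that would otherwise destroy positive homogeneity, and its purpose is evidently to let later arguments rescale all coordinates by a common positive factor when reducing tropical identity-checking to a question about functions invariant under positive scaling. Consequently the proof should be short bookkeeping rather than a substantive calculation.
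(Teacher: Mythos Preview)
Your argument is correct and matches the paper's own one-line proof, which simply observes that non-negative scaling distributes over both maximum and classical addition; you have merely unpacked this by writing $f$ as a tropical sum of coefficient-free monomials and checking the identity termwise. The edge-case remarks are harmless extra care beyond what the paper records.
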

\begin{proof}
This is immediate from the fact that scaling by a non-negative value distributes over both maximum and classical addition.
\end{proof}

\subsection{Tropical semigroups}
\label{subsec_semigp}
It is easy to see that the set of all $n \times n$ matrices with entries in $\trop$ (respectively $\ft$) forms a monoid (respectively semigroup) under the matrix multiplication induced from the operations $\oplus$ and $\otimes$. We denote these semigroups by $M_n(\trop)$ and $M_n(\ft)$.  We also write $UT_n(\trop)$ (respectively $UT_n(\ft)$) to denote the subsemigroup of $M_n(\trop)$ consisting of those matrices in $M_n(\trop)$ with entries below the main diagonal all $-\infty$ and entries on and above the main diagonal drawn from $\trop$ (respectively $\ft$).

More generally, let $(\Gamma, \rho)$ be a (not necessarily finite) set equipped with a reflexive, transitive binary relation. Consider the set
$\trop^{\Gamma \times \Gamma}$ of functions from $\Gamma \times \Gamma$ to $\trop$; we think of its elements as
matrices with rows and columns indexed by $\Gamma$, and use the notation $M_{i,j}$ for $M(i,j)$. We say that $A \in \trop^{\Gamma \times \Gamma}$ is bounded above if the subset $\{A_{i,j}: i, j \in \Gamma\} \subseteq \mathbb{T}$ has an upper bound in $\mathbb{T}$. We define
\begin{eqnarray*}
\Gamma(\trop) &=&\{A \in \trop^{\Gamma \times \Gamma} \mid A \mbox{ is bounded above and } A_{i,j} \neq -\infty \implies i \rho j \},\\
\Gamma(\ft) &=&\{A \in \trop^{\Gamma \times \Gamma} \mid A \mbox{ is bounded above and } A_{i,j} \neq -\infty \iff i \rho j \}.
\end{eqnarray*}
We define a multiplication on $\Gamma(\trop)$ (and on $\Gamma(\ft)$) as for matrices by:
$$(M \otimes N)_{i,j} = \bigoplus_{k \in \Gamma} M_{i,k} \otimes N_{k,j}.$$
The boundedness of $M$ and $N$ ensures the product is defined and bounded, while the transitivity of $\Gamma$ ensures that appropriate entries of the product are $-\infty$. The operation is also easily seen to be associative, so it gives $\Gamma(\trop)$ and $\Gamma(\ft)$ the structure of semigroups.

Notice that taking $\Gamma = \lbrace 1, \dots, n \rbrace$ with the complete binary relation [respectively, the obvious partial order] we have
$\Gamma(\trop) = M_n(\trop)$ and $\Gamma(\ft) = M_n(\ft)$ [respectively, $\Gamma(\trop) = UT_n(\trop)$ and $\Gamma(\ft) = UT_n(\ft)$].
More generally, if $\Gamma$ is finite then fixing a bijection with a set $\lbrace 1, \dots, n \rbrace$ yields an embedding of $\Gamma(\trop)$ into
$M_n(\trop)$. If $\Gamma$ is a finite partial order then any order preserving bijection  from $\Gamma$ to $\{1, \ldots, n\}$ embeds $\Gamma(\trop)$ in some $UT_n(\trop)$.

\subsection{Semigroup identities}
\label{subsec_identities}
We write $\mathbb{N}_0$ and $\mathbb{N}$ respectively for the natural numbers with and without $0$. If $\Sigma$ is a finite alphabet, then $\Sigma^*$ will denote the free monoid on $\Sigma$, that is, the set of
finite (possibly empty) words over $\Sigma$ under the operation of concatenation. We write $\Sigma^+$ for the subsemigroup of non-empty words
in $\Sigma^*$, which is the free semigroup on $\Sigma$. For $w \in \Sigma^*$ and $s \in \Sigma$ we write $|w|$ for
the length of $w$ and $|w|_s$ for the number of occurrences of the letter $s$ in $w$. For $1 \leq i \leq |w|$ we write $w_i$ to denote the $i$th letter of $w$. The \textit{content} of $w$ is the map
$\Sigma \to \mathbb{N}_0, s \mapsto |w|_s$.

Recall that a (\textit{semigroup}) \textit{identity} is a pair of words, usually written ``$u=v$'', in the free semigroup $\Sigma^+$ on an alphabet $\Sigma$. We say that the identity \textit{holds} in a semigroup $S$ (or that $S$ \textit{satisfies} the identity) if every morphism from $\Sigma^+$ to $S$ maps $u$ and $v$ to the same element of $S$. If a morphism maps $u$ and $v$ to the same element we say that it \textit{satisfies} the given identity in $S$; otherwise
it \textit{falsifies} it. (\textit{Monoid} identities are defined similarly using $\Sigma^*$ in place of $\Sigma^+$; we shall work primarily with semigroup identities, but it is easy to deduce corresponding results about monoid identities from these.)

For $u, w \in \Sigma^+$, we say that $u$ is a \emph{scattered subword} of $w$ if there are words $w_1, \ldots w_{k+1}, u_1, \ldots, u_k \in \Sigma^*$ such that $w=w_1u_1w_2u_2 \cdots w_ku_kw_{k+1}$ and $u=u_1u_2\cdots u_k$.

For some purposes it is easier to work with matrices over $\trop$, while for others it is easier to work with matrices over $\ft$. The following proposition ensures that we can choose to work with whichever is easier at each stage.

\begin{proposition}\label{prop_finitarywilldoGamma} Let $(\Gamma, \rho)$ be a set equipped with a reflexive, transitive binary relation. Then $\Gamma(\trop)$ and $\Gamma(\ft)$ satisfy exactly the same semigroup identities.
\end{proposition}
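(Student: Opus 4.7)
The plan is as follows. One direction is essentially by inspection: every matrix in $\Gamma(\ft)$ automatically satisfies the weaker condition defining membership in $\Gamma(\trop)$, so $\Gamma(\ft)$ is a subsemigroup of $\Gamma(\trop)$; hence any identity holding in $\Gamma(\trop)$ must also hold in $\Gamma(\ft)$.

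For the converse, I would fix an identity $u = v$ over a finite alphabet $\Sigma$ (finite, since $u,v$ are each finite words) that holds in $\Gamma(\ft)$, together with an arbitrary morphism $\func{\phi}{\Sigma^+}{\Gamma(\trop)}$, and aim to show $\phi(u) = \phi(v)$. The central idea is to approximate $\phi$ by a sequence of morphisms $\func{\phi_N}{\Sigma^+}{\Gamma(\ft)}$ obtained from $\phi$ by replacing each $-\infty$ entry at a position $(i,j)$ with $i \rho j$ by the real number $-N$, while leaving entries at positions $(i,j)$ with $i \not\rho j$ equal to $-\infty$. This replacement preserves boundedness and places $-\infty$ exactly where required, so $\phi_N(s) \in \Gamma(\ft)$ for each $s \in \Sigma$, and since $u = v$ holds in $\Gamma(\ft)$ we obtain $\phi_N(u) = \phi_N(v)$ for every $N$.

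The next step is to compare these equations entry-by-entry in the limit $N \to \infty$. For any word $w$, the $(i,j)$-entry of $\phi_N(w)$ is a tropical sum (supremum) of terms of the form $\sum_{k=1}^{|w|} (\phi_N(w_k))_{i_{k-1},i_k}$ over sequences $i = i_0, i_1, \dots, i_{|w|} = j$ with $i_{k-1} \rho i_k$. Any such term that uses at least one substituted $-N$ entry tends to $-\infty$ as $N \to \infty$, while terms consisting entirely of originally finite entries are independent of $N$ and coincide with the corresponding terms computing $(\phi(w))_{i,j}$. Consequently, for sufficiently large $N$, the entry $(\phi_N(w))_{i,j}$ equals $(\phi(w))_{i,j}$ whenever the latter is finite, and otherwise tends to $-\infty$.

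Combining these observations, $\phi(u) = \phi(v)$ follows by trichotomy on each entry: if $(\phi(u))_{i,j}$ and $(\phi(v))_{i,j}$ are both finite they equal $(\phi_N(u))_{i,j} = (\phi_N(v))_{i,j}$ for large $N$; if both are $-\infty$ they agree trivially; and the mixed case is impossible, as it would force a bounded sequence of real numbers to equal a sequence tending to $-\infty$. The only mildly delicate point is the path-sum bookkeeping required to justify the limit claim above, but the transitivity of $\rho$ and the explicit supremum-of-paths formula for iterated tropical matrix multiplication make this routine.
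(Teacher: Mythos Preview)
Your proof is correct and shares the same core idea as the paper's: perturb the $-\infty$ entries at $\rho$-related positions to very negative real numbers, so that the resulting matrices lie in $\Gamma(\ft)$, and then recover information about $\phi$ from these perturbations. The paper argues the contrapositive and packages the perturbation as a single formal variable $x$ in the polynomial semiring $\trop[x]$, building an auxiliary semigroup $\Gamma(\trop[x])$ and then observing that the relevant entries of $\psi(u)$ and $\psi(v)$ are polynomials with different constant terms, hence differ for sufficiently small finite $x$. Your version is more elementary: you work directly with the real parameter $-N$, prove the direct implication, and use an explicit limit argument on the path-sum formula. The uniform bound (each ``bad'' path term is at most $(|w|-1)M - N$ for $M$ a common upper bound on the generator entries) is what makes the limit go through even when $\Gamma$ is infinite; you allude to this in your final sentence, and it is indeed routine. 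Both routes arrive at the same place; yours trades the algebraic formalism for a short analytic step.
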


\begin{proof}
Since $\Gamma(\ft)$ is a subsemigroup of $\Gamma(\trop)$, any identity satisfied in the latter is also satisfied in the former. To prove the converse
we shall work with a semigroup $\Gamma(\trop[x])$, which is constructed by exactly the same process as $\Gamma(\trop)$ but using the formal polynomial semiring $\trop[x]$ in place of $\trop$; its elements are thus functions $\Gamma \times \Gamma \to \trop[x]$ which are bounded above with respect to the order $\preceq$ on $\trop[x]$. For any evaluation of $x$ in $\mathbb{T}$, we obtain a supremum-preserving semiring morphism from $\trop[x]$ to $\trop$, which in turn it induces a semigroup morphism from $\Gamma(\trop[x])$ to $\Gamma(\trop)$.

Returning to the proof of the converse, suppose an identity $u=v$ over alphabet $\Sigma$ is \textit{not} satisfied in $\Gamma(\trop)$. Let $\phi : \Sigma^+ \to \Gamma(\trop)$ be a morphism falsifying the identity, and choose $i$ and $j$ with $\phi(u)_{i,j} \neq \phi(v)_{i,j}$.

Define a morphism $\psi : \Sigma^+ \to \Gamma(\trop[x])$ by for each $a \in \Sigma$ setting
$$\psi(a)_{k,l} = \begin{cases} \phi(a)_{k,l} & \textrm{ if $\phi(a)_{k,l} \neq -\infty$} \\
x &\textrm{ if $k \rho l$ but $\phi(a)_{k,l} = -\infty$} \\
-\infty &\textrm{ otherwise.}
\end{cases}$$
For any $x \in \trop$, because evaluation at $x$ induces a semigroup morphism from $\Gamma(\trop[x])$ to $\Gamma(\trop)$, we may define a semigroup morphism
$\psi_x : \Sigma^+ \to \Gamma(\trop)$ by $\psi_x(w)_{i,j} = [\psi(w)_{i,j}](x)$. If $x\in\ft$
then the image of $\psi_x$ is contained in $\Gamma(\ft)$, while if $x = -\infty$ we recover the original morphism $\phi$.

Consider now the formal polynomials $\psi(u)_{i,j}$ and $\psi(v)_{i,j}$. Since
at $x= -\infty$ they evaluate to $\phi(u)_{i,j}$ and $\phi(v)_{i,j}$ respectively, which are different, they must have different
constant terms. It follows that for $x$ small enough but finite, they evaluate to different values. Thus, for small enough
$x$, $\psi_x$ is a morphism to $\Gamma(\ft)$ falsifying the identity $u=v$.
\end{proof}
Proposition \ref{prop_finitarywilldoGamma} has the following immediate consequence:
\begin{corollary}\label{cor_finitarywilldo} For all $n \in \mathbb{N}$,
\begin{itemize}
\item[(i)] $M_n(\trop)$ and $M_n(\ft)$ satisfy exactly the same semigroup identities; and
\item[(ii)] $UT_n(\trop)$ and $UT_n(\ft)$ satisfy exactly the same semigroup identities.
\end{itemize}
\end{corollary}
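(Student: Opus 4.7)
The plan is to observe that the corollary is an immediate specialisation of Proposition \ref{prop_finitarywilldoGamma} to two particular choices of the pair $(\Gamma, \rho)$, using the observations made in the paragraph immediately following the definitions of $\Gamma(\trop)$ and $\Gamma(\ft)$.

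For part (i), I would take $\Gamma = \{1, 2, \dots, n\}$ equipped with the complete binary relation $\rho = \Gamma \times \Gamma$. This relation is obviously reflexive and transitive, so Proposition \ref{prop_finitarywilldoGamma} applies. By the remark already made in Section~\ref{subsec_semigp}, under this identification we have $\Gamma(\trop) = M_n(\trop)$ and $\Gamma(\ft) = M_n(\ft)$, so the two semigroups satisfy the same semigroup identities. For part (ii), I would take the same $\Gamma = \{1, 2, \dots, n\}$ but equipped with the usual total order $\leq$ as the relation $\rho$; this is again reflexive and transitive, and the same remark in Section~\ref{subsec_semigp} identifies $\Gamma(\trop)$ with $UT_n(\trop)$ and $\Gamma(\ft)$ with $UT_n(\ft)$, whence Proposition \ref{prop_finitarywilldoGamma} gives the claim.

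There is no real obstacle here: the whole content is that the two examples $M_n$ and $UT_n$ fit the general framework of $\Gamma(\trop)$ for a reflexive, transitive $(\Gamma, \rho)$, which is noted explicitly in the preliminaries. The only thing one might elaborate on, if one wished, is to verify the two identifications $\Gamma(\trop) = M_n(\trop)$ and $\Gamma(\trop) = UT_n(\trop)$ in the respective cases, but these are immediate from the definitions of the multiplication on $\Gamma(\trop)$ and of matrix multiplication in $M_n(\trop)$ (noting that in the upper-triangular case the condition ``$A_{i,j} \neq -\infty \implies i \leq j$'' is precisely the condition defining $UT_n(\trop)$).
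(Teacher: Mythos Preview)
Your proposal is correct and matches the paper's approach exactly: the paper states the corollary as an immediate consequence of Proposition~\ref{prop_finitarywilldoGamma}, relying on precisely the identifications of $M_n$ and $UT_n$ with $\Gamma(\trop)$ noted in Section~\ref{subsec_semigp}. There is nothing to add.
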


The following two elementary results will allow us at various points to simplify the structure of and number of parameters in the matrices we must consider.

\begin{lemma}\label{lemma_independentscale}
Let $\phi : \Sigma^+ \to M_n(\trop)$ be a morphism, $\mu_s \in \ft$
for each $s \in \Sigma$, and define a new morphism $\psi : \Sigma^+ \to M_n(\trop)$ by $\psi(s) = \mu_s \otimes \phi(s)$ for all $s \in \Sigma$.
If $u$ and $v$ are words with the same content and $\phi(u) = \phi(v)$, then $\psi(u) = \psi(v)$.
\end{lemma}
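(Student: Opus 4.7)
The plan is to observe that the tropical scalar $\mu_s$ pulls out of matrix products, so that $\psi(w)$ differs from $\phi(w)$ only by an overall tropical scalar which depends only on the content of $w$. Since $u$ and $v$ have the same content, the two scalars coincide, and the conclusion follows immediately from $\phi(u) = \phi(v)$.

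More precisely, the first step is to note the elementary compatibility of tropical scaling with tropical matrix multiplication: for any $A, B \in M_n(\trop)$ and any $\lambda, \nu \in \ft$,
\[
(\lambda \otimes A)(\nu \otimes B) = (\lambda \otimes \nu) \otimes (AB),
\]
which is immediate from the definition of tropical matrix multiplication, since every entry of the product is a supremum of sums, each of which picks up one copy of $\lambda$ and one copy of $\nu$. The second step is a routine induction on the length of a word $w \in \Sigma^+$ which, using this compatibility at each step, shows that
\[
\psi(w) = \left( \bigotimes_{s \in \Sigma} \mu_s^{\otimes |w|_s} \right) \otimes \phi(w).
\]
The scalar prefactor depends only on the content map $s \mapsto |w|_s$.

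The final step is simply to apply this formula to $u$ and $v$. Since they have the same content, the scalar prefactors agree, so
\[
\psi(u) = \left( \bigotimes_{s \in \Sigma} \mu_s^{\otimes |u|_s} \right) \otimes \phi(u) = \left( \bigotimes_{s \in \Sigma} \mu_s^{\otimes |v|_s} \right) \otimes \phi(v) = \psi(v),
\]
using the hypothesis $\phi(u) = \phi(v)$ in the middle equality. There is no real obstacle here: the entire argument is a bookkeeping exercise about how tropical scalars interact with tropical matrix multiplication, and the content hypothesis is exactly what is needed to make the scalar bookkeeping cancel out.
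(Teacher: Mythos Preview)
Your proof is correct and is essentially identical to the paper's own argument: the paper also invokes the fact that tropical scaling commutes with matrix multiplication to obtain $\psi(u) = \left(\bigotimes_{s \in \Sigma} \mu_s^{|u|_s}\right) \otimes \phi(u)$, and then uses the equal-content hypothesis together with $\phi(u)=\phi(v)$ to conclude. Your version is slightly more explicit about the induction, but the structure and key displayed chain of equalities match the paper exactly.
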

\begin{proof}
Since tropical scaling commutes with matrix multiplication it is easy to see that
$$\psi(u) \ = \ \left(\bigotimes_{s \in \Sigma} \mu_s^{|u|_s}\right) \otimes \phi(u) \ = \ \left(\bigotimes_{s \in \Sigma} \mu_s^{|v|_s}\right) \otimes \phi(v) \ = \ \psi(v),$$
where the powers are to be interpreted tropically, and the middle equality holds because $\phi(u) = \phi(v)$ and $u$ and $v$ have the same content.
\end{proof}

\begin{lemma}\label{lemma_bottomright}
Suppose an identity $u = v$ over alphabet $\Sigma$ is satisfied by all morphisms $\phi : \Sigma^+ \to UT_n(\ft)$ with the property
that $\phi(s)_{n,n} = 0$ for all $s \in \Sigma$. If $u$ and $v$ have the same content then the identity $u = v$ holds in $\utnt$.
\end{lemma}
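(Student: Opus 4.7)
The plan is to reduce an arbitrary morphism into $\utnt$ to a morphism satisfying the bottom right entry normalisation, using the two preceding tools (Corollary~\ref{cor_finitarywilldo} and Lemma~\ref{lemma_independentscale}).

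First I would invoke Corollary~\ref{cor_finitarywilldo}(ii) to reduce to showing the identity $u=v$ holds in $UT_n(\ft)$. This step is crucial because in $UT_n(\ft)$ the diagonal entries of every matrix lie in $\mathbb{R}$ (are finite), so we can take their additive inverses — something we could not do if $-\infty$ were permitted on the diagonal.

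Next, given an arbitrary morphism $\phi : \Sigma^+ \to UT_n(\ft)$, I would normalise each generator. For each $s \in \Sigma$ set $\mu_s = -\phi(s)_{n,n} \in \ft$, and define a new morphism $\psi : \Sigma^+ \to UT_n(\ft)$ by $\psi(s) = \mu_s \otimes \phi(s)$. Then by construction $\psi(s)_{n,n} = 0$ for every $s \in \Sigma$, so the hypothesis of the lemma applies to $\psi$ and yields $\psi(u) = \psi(v)$.

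Finally, I would pull the equality back to $\phi$. Since $\phi(s) = (-\mu_s) \otimes \psi(s)$, the morphism $\phi$ is obtained from $\psi$ by the same tropical scaling construction as in Lemma~\ref{lemma_independentscale}, with scalars $-\mu_s$. Because $u$ and $v$ have the same content by assumption, that lemma applied with $\psi$ in the role of the initial morphism delivers $\phi(u) = \phi(v)$, as required. There is no real obstacle here; the only subtlety worth emphasising is that the same-content hypothesis is needed precisely so that the product $\bigotimes_{s \in \Sigma} \mu_s^{|u|_s}$ equals $\bigotimes_{s \in \Sigma} \mu_s^{|v|_s}$, allowing the scalars to cancel, and that the reduction to $UT_n(\ft)$ is what makes the scalars $\mu_s$ well-defined in the first place.
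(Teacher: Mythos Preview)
Your proposal is correct and follows essentially the same route as the paper: reduce to $UT_n(\ft)$ via Corollary~\ref{cor_finitarywilldo}, normalise an arbitrary morphism by tropically scaling each generator so the $(n,n)$ entry becomes $0$, and then use Lemma~\ref{lemma_independentscale} together with the same-content hypothesis to transfer the equality back. The paper's proof is simply a terser statement of the same argument.
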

\begin{proof}
Every morphism to $UT_n(\ft)$ can clearly be obtained from one of the given form by the construction in
Lemma~\ref{lemma_independentscale}, which since $u$ and $v$ have the same content means it satisfies the identity.
So $UT_n(\ft)$ and hence also (by Corollary~\ref{cor_finitarywilldo}) $UT_n(\trop)$ satisfy the identity.
\end{proof}

\section{Identities in $UT_2(\trop)$.}\label{sec_2x2identities}

In this section we shall give an elementary but powerful characterisation of the identities which hold in $UT_2(\trop)$ (or equivalently,
by Corollary~\ref{cor_finitarywilldo}, in $UT_2(\ft)$).

Let $w$ be a word over an alphabet $\Sigma$. For $s \in \Sigma$ and $0 \leq i \leq |w|$ we write $\lambda_s^w(i)$ for the number
of occurrences of the letter $s$ within the first $i$ letters of the word $w$ (so in particular $\lambda_s^w(0) = 0$ for all $s \in \Sigma$).
For each $t \in \Sigma$ we define a $0$-flat formal tropical polynomial having variables $x_s$ for each $s \in \Sigma$ as follows:
$$f_t^w=  \bigoplus_{w_i = t} \bigotimes_{s \in \Sigma} x_s^{\lambda_s^w(i-1)},$$
where of course the powers are to be interpreted tropically, and a maximum over the empty set is taken to be $-\infty$. By a slight abuse of notation, given $\underline{x} \in \mathbb{FT}^{\Sigma}$ we shall also write $x_s$ to denote the image of $s$ under $\underline{x}$, and $f_t^w(\underline{x})$ to denote the corresponding evaluation of $f_t^w$ at the variables $x_s \in \mathbb{FT}$.

We shall see shortly (Theorem~\ref{thm_kletter}) that the values of the polynomials $f_t^w$ exactly characterise the identities which
hold in $UT_2(\trop)$. The following lemma explains how they arise from the multiplication in $UT_2(\ft)$:

\begin{lemma}\label{lemma_topright}
Suppose a morphism $\phi : \Sigma^+ \to UT_2(\trop)$ is such that $\phi(s)_{1,1} \in \ft$ and $\phi(s)_{2,2} = 0$ for all $s \in \Sigma$,
say
$$\phi(s) = \left(\begin{array}{c c}
x_s & x_s'\\
-\infty& 0
\end{array}\right), \mbox{ where } x_s, x_s' \in \mathbb{T} \mbox{ with } x_s \neq -\infty.$$
Then for any word $w \in \Sigma^+$,
$$\phi(w)_{1,2} = \bigoplus_{s \in \Sigma} x_s' \otimes f_s^w(\ul{x})$$
\end{lemma}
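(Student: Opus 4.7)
The plan is to establish the formula by a direct computation of the $(1,2)$-entry of the matrix product $\phi(w_1)\otimes\phi(w_2)\otimes\cdots\otimes\phi(w_n)$, exploiting the very constrained shape of the matrices $\phi(s)$. Because each $\phi(s)$ has $-\infty$ in the bottom-left position, every nonzero contribution to $\phi(w)_{1,2}$ arises from a sequence of indices that starts in row $1$, crosses from row $1$ to row $2$ exactly once using some $(1,2)$-entry, and stays in row $2$ thereafter. So the $(1,2)$-entry of the product decomposes naturally as a tropical sum indexed by the position $i$ at which this crossing occurs.

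Concretely, I would expand the product using the definition
$$(M_1 \otimes \cdots \otimes M_n)_{1,2} \;=\; \bigoplus_{(k_0,k_1,\dots,k_n)} \bigotimes_{j=1}^{n} (M_j)_{k_{j-1},k_j},$$
where $k_0=1$, $k_n=2$, and each $k_j \in \{1,2\}$. The hypothesis that the $(2,1)$-entry of every $\phi(s)$ is $-\infty$ forces the sequence $(k_0,\dots,k_n)$ to be monotonic: it equals $1$ for $j<i$ and $2$ for $j\geq i$, for some unique $1 \leq i \leq n$. Using $\phi(w_j)_{1,1} = x_{w_j}$ for $j<i$, $\phi(w_i)_{1,2} = x'_{w_i}$, and $\phi(w_j)_{2,2} = 0$ for $j>i$, the inner product collapses to
$$\left(\bigotimes_{j<i} x_{w_j}\right) \otimes x'_{w_i} \otimes \left(\bigotimes_{j>i} 0\right) \;=\; x'_{w_i} \otimes \bigotimes_{j<i} x_{w_j}.$$

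It then remains to rewrite $\bigotimes_{j<i} x_{w_j}$ in terms of the counting functions $\lambda_s^w$: by definition $\lambda_s^w(i-1)$ counts the occurrences of $s$ among $w_1,\dots,w_{i-1}$, so this product equals $\bigotimes_{s\in\Sigma} x_s^{\lambda_s^w(i-1)}$. Finally I would group the tropical sum over $i$ according to the value of $w_i$: collecting together those $i$ with $w_i=t$ produces the polynomial $f_t^w(\underline{x})$ multiplied by the scalar $x'_t$, which yields the claimed formula $\phi(w)_{1,2} = \bigoplus_{t \in \Sigma} x'_t \otimes f_t^w(\underline{x})$.

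There is no real obstacle here; the only mildly delicate point is the bookkeeping that turns the indexing over positions $i$ into an indexing over letters $t$, and making sure the empty-max convention for $f_t^w$ correctly handles the case when $t$ does not appear in $w$ (so that $x'_t$ is absorbed with a $-\infty$ coefficient, as it should be, reflecting the fact that the letter $t$ plays no role in $\phi(w)$). Alternatively, one could present the argument as an induction on $|w|$ using the recursive identity $\phi(ws)_{1,2} = \phi(w)_{1,2} \oplus \phi(w)_{1,1} \otimes x'_s$; this gives a slightly cleaner write-up but hides the direct combinatorial meaning of the formula, so I would prefer the expansion approach above.
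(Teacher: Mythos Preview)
Your proposal is correct and is exactly the ``straightforward calculation, using the definitions of matrix multiplication and of the polynomial functions $f_s^w$'' that the paper invokes without writing out. The paper gives no further detail, so your expansion (including the observation about monotone index sequences and the regrouping by letter) is precisely what is intended.
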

\begin{proof}
This is a straightforward calculation, using the definitions of matrix multiplication and of the polynomial functions $f_s^w$.
\end{proof}

The next lemma says that the polynomial functions $f_t^w$ are together sufficient to characterise the content of the word $w$.

\begin{lemma}\label{lemma_samecontent}
If $f_t^w(\underline{x})=f_t^v(\underline{x})$ for all $t \in \Sigma$ and $\underline{x} \in \mathbb{FT}^\Sigma$ then $w$ and $v$ have the same content.
\end{lemma}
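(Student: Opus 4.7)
The plan is to recover the content of $w$ from the polynomial functions $f_t^w$ by evaluating them at carefully chosen points. Two pieces of information need to be extracted separately: the support of the content (which letters appear at all) and the multiplicity of each such letter.

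For the support, observe that $f_t^w$ is a supremum over the positions $i$ with $w_i = t$; if no such position exists then $f_t^w$ is identically $-\infty$, and otherwise some monomial appears, so evaluating at $\underline{x} = \underline{0}$ gives the value $0$. Hence $f_t^w \equiv f_t^v$ (as functions) forces $t$ to occur in $w$ if and only if it occurs in $v$.

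For the multiplicity of a letter $t$ that appears in both $w$ and $v$, I would evaluate at the point $\underline{x}$ with $x_t = 1$ and $x_s = 0$ for all $s \neq t$. Then every monomial in $f_t^w$ collapses, since $\bigotimes_s x_s^{\lambda_s^w(i-1)} = \lambda_t^w(i-1)$ (tropically, additive zero exponents contribute nothing and $x_t^{\lambda_t^w(i-1)} = \lambda_t^w(i-1)$). Thus
\[
f_t^w(\underline{x}) \;=\; \max\bigl\{\lambda_t^w(i-1) : w_i = t\bigr\},
\]
and this maximum is attained at the last occurrence of $t$ in $w$, giving $|w|_t - 1$. The analogous evaluation on $v$ gives $|v|_t - 1$, and the assumed equality forces $|w|_t = |v|_t$.

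Combining the two pieces, $w$ and $v$ have the same content. There is no real obstacle here: once one notices that setting every variable except $x_t$ to $0$ reduces $f_t^w$ to reading off counts of $t$, the argument is immediate. The only thing to be slightly careful about is handling the case where $t$ does not occur (so that the supremum is empty and the polynomial represents $-\infty$), which is dispatched by the support argument above.
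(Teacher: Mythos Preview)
Your proof is correct and is essentially identical to the paper's: both first observe that $f_t^w$ is identically $-\infty$ exactly when $t$ does not occur in $w$, and then evaluate at $x_t=1$, $x_s=0$ for $s\neq t$ to read off $|w|_t-1$ as the maximum of $\lambda_t^w(i-1)$ over positions $i$ with $w_i=t$. The only cosmetic difference is that you make the support step slightly more explicit by evaluating at $\underline{0}$.
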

\begin{proof}
It is easy to see that $f_t^w$ is the constant $-\infty$ function if and only if $w$ does not contain any occurrences of the letter $t$. Thus, we may
assume that exactly the same letters from $\Sigma$ occur in $w$ and $v$.

Let $z \in \Sigma$, and suppose that $z$ occurs in both $w$ and $v$.
Define $\ul{x} \in \mathbb{FT}^\Sigma$ by $x_z = 1$ and $x_s = 0$ for $s \in \Sigma \setminus \lbrace z \rbrace$. Then
$$f_z^w(\ul{x}) \ = \ \bigoplus_{w_i = z} \bigotimes_{s \in \Sigma} x_s^{\lambda_s^w(i-1)}
\ = \ \bigoplus_{w_i = z} \lambda_z^w(i-1)$$
where the last equality is because of the values of $\underline{x}$. Since for fixed $z$ and $w$ the function $\lambda_z^w(i)$ is non-decreasing with $i$, it is clear that the maximum on the right is attained when $i$ is the position of the final $z$ in $w$. With this value of $i$, we see that $f_z^w(x) = \lambda_z^w(i-1) = |w|_z -1$. By the same argument, $f_z^v(x) = |v|_z - 1$. But by assumption $f_z^w(x) = f_z^v(x)$, so we must have $|w|_z = |v|_z$.
\end{proof}

We are now ready to prove the main theorem of this section.

\begin{theorem}
\label{thm_kletter}
The identity $w=v$ over alphabet $\Sigma$ is satisfied in $UT_2(\trop)$ if and only if the tropical polynomials $f_t^w$ and $f_t^v$ are equivalent for all
$t \in \Sigma$.
\end{theorem}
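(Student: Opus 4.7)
The plan is to deduce both directions from Lemma~\ref{lemma_topright}, which already exhibits $\phi(w)_{1,2}$ as an explicit $0$-flat tropical combination $\bigoplus_s x'_s \otimes f_s^w(\ul{x})$ of the polynomials in question; the remaining entries of $\phi(w)$ will be controlled either structurally or by content.

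\textbf{Forward direction.} Suppose $w = v$ holds in $UT_2(\trop)$. Fix $t \in \Sigma$ and $\ul{x} \in \ft^\Sigma$; since two tropical polynomials are equivalent iff they agree on $\ft^\Sigma$ (see Section~\ref{subsec_poly}), it suffices to show $f_t^w(\ul{x}) = f_t^v(\ul{x})$. I construct a morphism $\phi : \Sigma^+ \to UT_2(\trop)$ of the form considered in Lemma~\ref{lemma_topright} that \emph{isolates} the $t$-summand: set $\phi(s)_{1,1} = x_s$, $\phi(s)_{2,2} = 0$, $\phi(t)_{1,2} = 0$, and $\phi(s)_{1,2} = -\infty$ for $s \neq t$. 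Lemma~\ref{lemma_topright} then collapses the sum to a single surviving term, yielding $\phi(w)_{1,2} = f_t^w(\ul{x})$ and $\phi(v)_{1,2} = f_t^v(\ul{x})$, and the hypothesis $\phi(w) = \phi(v)$ forces these to coincide.

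\textbf{Backward direction.} Suppose $f_t^w \equiv f_t^v$ for every $t \in \Sigma$. By Lemma~\ref{lemma_samecontent}, $w$ and $v$ have the same content, and by Corollary~\ref{cor_finitarywilldo}(ii) it suffices to show the identity holds in $UT_2(\ft)$. Lemma~\ref{lemma_bottomright} reduces the task further to morphisms $\phi : \Sigma^+ \to UT_2(\ft)$ with $\phi(s)_{2,2} = 0$ for every $s \in \Sigma$, that is, to morphisms of the form treated in Lemma~\ref{lemma_topright}. For any such $\phi$, the $(2,1)$ and $(2,2)$ entries of $\phi(w)$ and $\phi(v)$ are structurally $-\infty$ and $0$; the $(1,1)$ entries both equal $\bigotimes_{s} x_s^{|w|_s} = \bigotimes_{s} x_s^{|v|_s}$ because the contents agree; and the $(1,2)$ entries coincide by Lemma~\ref{lemma_topright} together with the assumed equivalence of the $f_s$'s. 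Hence $\phi(w) = \phi(v)$.

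\textbf{Where is the work?} Almost all of it has been front-loaded into the preceding lemmas, so the proof is largely an assembly. The only mildly delicate step is the ``isolation'' construction in the forward direction, which crucially uses the availability of $-\infty$ as a matrix entry in $\trop$; this is a freedom we are entitled to exploit because we are \emph{designing} the morphism, even though the analogous statement in $\ft$ (Corollary~\ref{cor_finitarywilldo}(ii)) is the form we ultimately apply in the backward direction.
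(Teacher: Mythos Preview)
Your proof is correct and follows essentially the same route as the paper's: the forward direction uses the identical ``isolation'' morphism (with $x_t' = 0$ and $x_s' = -\infty$ for $s \neq t$) to extract $f_t^w(\ul{x})$ via Lemma~\ref{lemma_topright}, while the backward direction invokes Lemmas~\ref{lemma_samecontent} and~\ref{lemma_bottomright} and then checks the four matrix entries in the same way. The only cosmetic difference is that you phrase the forward direction directly rather than as a contrapositive, and you make the $(2,1)$ entry explicit; your extra citation of Corollary~\ref{cor_finitarywilldo}(ii) is harmless but redundant, since Lemma~\ref{lemma_bottomright} already passes from $UT_2(\ft)$ to $UT_2(\trop)$.
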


\begin{proof}
Suppose first that we can choose $\underline{x} \in \mathbb{FT}^\Sigma$ and $t \in \Sigma$, such that  $f_t^w(\underline{x})\neq f_t^v(\underline{x})$. Define a morphism
$$\phi \ : \ \Sigma^+ \to \uttt, \ \ \ a \mapsto \left(\begin{array}{c c}
x_a & x_a'\\
-\infty& 0
\end{array}\right)$$
where $x_t' = 0$ and $x_s' = -\infty$ for each $s \neq t$. Then by Lemma~\ref{lemma_topright},
$$\phi(w)_{1,2} \ = \ \bigoplus_{s \in \Sigma} x_s' \otimes f_s^w(\ul{x}) \ = \ f_t^w(\ul{x})$$
and similarly
$$\phi(v)_{1,2} \ = \ \bigoplus_{s \in \Sigma} x_s' \otimes f_s^v(\ul{x}) \ = \ f_t^v(\ul{x}) \ \neq \ \phi(w)_{1,2}$$
so the morphism $\phi$ falsifies the identity in $\uttt$.

Conversely, suppose that $f_t^w(\underline{x})=f_t^v(\underline{x})$ for all $t \in \Sigma$ and all $\underline{x} \in \mathbb{FT}^\Sigma$. By
Lemma~\ref{lemma_samecontent}, $w$ and $v$ have the same content. Hence, by Lemma~\ref{lemma_bottomright} it suffices to show that the identity $w=v$ is satisfied by morphisms $\phi : \Sigma^+ \to UT_2(\ft)$ such that $\phi(s)_{2,2} = 0$ for all $s \in S$. Let $\phi$ be such a morphism, and for each $s \in \Sigma$
write $x_s = \phi(s)_{1,1}$ and $x_s' = \phi(s)_{1,2}$.

It is immediate from the properties of $\phi$ that $\phi(w)_{2,2} = 0 = \phi(v)_{2,2}$.
It is also easy to see that for any word $u$,
$$\phi(u)_{1,1} \ = \ \bigotimes_{i=1}^{|u|} \phi(u_i)_{1,1} \ = \ \sum_{s \in \Sigma} |u|_s \phi(s)_{1,1}.$$
Since $w$ and $v$ have the same content, it follows that $\phi(w)_{1,1} = \phi(v)_{1,1}$.
Finally, Lemma~\ref{lemma_topright} gives
$$\phi(w)_{1,2} \ = \ \bigoplus_{t \in \Sigma} x_t' \otimes f_t^w(\ul{x}) \ = \ \bigoplus_{t \in \Sigma} x_t' \otimes f_t^v(\ul{x}) \ = \ \phi(v)_{1,2}$$
so that $\phi(w) = \phi(v)$ as required.
\end{proof}

We conclude this section by noting that checking whether a pair of words on an alphabet containing just two letters defines an identity on $UT_2(\mathbb{T})$ amounts to a comparison of four pairs of tropical polynomials in just one variable.
\begin{corollary}\label{cor_kletter}
\label{2letterreduced}
Let $\Sigma$ be a two-letter alphabet and $w,v \in \Sigma^+$. Then the identity $w=v$ holds in $UT_2(\mathbb{T})$ if and only if $f_z^w(x,1)=f_z^v(x,1)$ and $f_z^w(x,-1)=f_z^v(x,-1)$ for all $x \in \mathbb{FT}$, and all $z \in \Sigma$.
\end{corollary}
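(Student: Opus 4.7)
The plan is to derive this as a direct consequence of Theorem \ref{thm_kletter} together with the positive homogeneity of $0$-flat tropical polynomials supplied by Lemma \ref{lem:harmony}. The forward direction is immediate: if $w=v$ holds in $UT_2(\trop)$, then by Theorem \ref{thm_kletter} we have $f_z^w = f_z^v$ as functions on $\ft^{\Sigma} = \ft^2$, and in particular they agree on the two slices where the second coordinate equals $1$ or $-1$.

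For the converse, write $\Sigma = \{a,b\}$ and identify evaluations in $\ft^{\Sigma}$ with pairs $(x_a, x_b) \in \ft^2$. By Theorem \ref{thm_kletter} it suffices to show that agreement of $f_z^w$ and $f_z^v$ on the slices $x_b = 1$ and $x_b = -1$ forces $f_z^w = f_z^v$ on all of $\ft^2$ for each $z \in \Sigma$. Since each $f_z^w$ is $0$-flat, Lemma \ref{lem:harmony} yields $f_z^w(\lambda x_a, \lambda x_b) = \lambda f_z^w(x_a, x_b)$ for every $\lambda \geq 0$. Given a target point with $x_b > 0$, applying this with $\lambda = x_b$ and source point $(x_a/x_b, 1)$ produces
\[
f_z^w(x_a, x_b) \;=\; x_b \cdot f_z^w(x_a/x_b, 1) \;=\; x_b \cdot f_z^v(x_a/x_b, 1) \;=\; f_z^v(x_a, x_b),
\]
invoking the hypothesis on the slice $x_b = 1$. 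A symmetric scaling with $\lambda = -x_b > 0$ and source point $(x_a/|x_b|, -1)$ handles the case $x_b < 0$ via the slice $x_b = -1$.

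The main (still mild) obstacle is that this scaling argument covers only $\ft^2 \setminus \{x_b = 0\}$, since $\lambda = 0$ cannot reach a slice at $\pm 1$. To close the gap at $x_b = 0$, I will invoke continuity: each tropical polynomial evaluates to a piecewise-linear, hence continuous, function on $\mathbb{R}^k$, so equality on the dense open set $\{x_b \neq 0\}$ extends to its closure. A minor bookkeeping point is the degenerate case where $f_z^w$ is identically $-\infty$, corresponding to $z$ not occurring in $w$; but the slice hypothesis then forces $f_z^v(x,1) = -\infty$ for every $x \in \ft$, which by inspection of the definition of $f_z^v$ forces $z$ not to occur in $v$ either, so both polynomials are trivially equal. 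No serious difficulties are anticipated: the essence of the argument is the homogeneity observation, once Theorem \ref{thm_kletter} is available.
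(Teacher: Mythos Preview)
Your proposal is correct and follows essentially the same route as the paper: the forward direction is immediate from Theorem~\ref{thm_kletter}, and the converse uses the $0$-flatness and Lemma~\ref{lem:harmony} to scale an arbitrary point $(c,d)$ with $d \neq 0$ onto the slice $d/|d| = \pm 1$, then invokes continuity to close the case $d = 0$. Your explicit treatment of the degenerate case $f_z^w \equiv -\infty$ is a small bonus, since the paper's continuity step tacitly assumes the functions are real-valued.
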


\begin{proof}
The direct implication follows immediately from Theorem~\ref{thm_kletter}. Conversely, suppose that $f_z^w(x,1)=f_z^v(x,1)$ and $f_z^w(x,-1)=f_z^v(x,-1)$ for all $z \in \Sigma$ and all $x \in \mathbb{FT}$. Since the polynomials $f_z^w$ and $f_z^v$ are $0$-flat, for any $c,d \in \mathbb{R}$ with $d \neq 0$ applying Lemma~\ref{lem:harmony} we have
$$f_z^w(c,d) \ = \ |d| f_z^w\left( \frac{c}{|d|}, \frac{d}{|d|} \right) \ = \ |d| f_z^v \left( \frac{c}{|d|}, \frac{d}{|d|} \right) \ = \ f_z^v(c,d)$$
where $|d|$ denotes the absolute value of $d$ so that $d / |d|$ is $1$ or $-1$.
Since the functions of the form $f_z^w(c, -), f_z^v(c, -): \mathbb{R} \rightarrow \mathbb{R}$ are continuous it follows also that
$f_z^w(c,0) = f_z^v(c,0)$ for all $c \in \mathbb{R}$, so by Theorem~\ref{thm_kletter} the identity $w=v$ holds in $UT_2(\mathbb{T})$.
\end{proof}

\begin{example}\label{example_identity}  Consider the words
$$w=AB^2AABBA^2B \textrm{ and } v=AB^2ABABA^2B$$
over the two letter alphabet $\Sigma = \{A,B\}$.
The identity $w=v$ is well-known to hold in the bicyclic monoid (see \cite{Dja77,Shl90} and Section~\ref{sec_bicyclic} below); we claim that it is also satisfied in $UT_2(\mathbb{T})$. The relevant tropical polynomials are
\begin{eqnarray*}
f_A^w &=& x_A^0x_B^0 \oplus x_A^1x_B^2 \oplus x_A^2x_B^2 \oplus x_A^3x_B^4 \oplus x_A^4x_B^4,\\
f_A^v &=& x_A^0x_B^0 \oplus x_A^1x_B^2 \oplus x_A^2x_B^3 \oplus x_A^3x_B^4 \oplus x_A^4x_B^4,\\
f_B^w &=& x_A^1x_B^0 \oplus x_A^1x_B^1 \oplus x_A^3 x_B^2 \oplus x_A^3 x_B^3 \oplus x_A^5x_B^4, \textrm{ and} \\
f_B^v &=& x_A^1x_B^0 \oplus x_A^1x_B^1 \oplus x_A^2x_B^2 \oplus x_A^3x_B^3 \oplus x_A^5x_B^4.
\end{eqnarray*}
By Corollary \ref{cor_kletter} it suffices to check whether
$f_A^w(x,1) =  f_A^v(x, 1)$, $f_A^w(x,-1) =  f_A^v(x,-1)$, $f_B^w(x, 1) =  f_B^v(x, 1)$ and $f_B^w(x,-1) =  f_B^v(x,-1)$ for
all $x \in \ft$.
Expanding in classical notation the equations to check become:
\begin{eqnarray*}
{\rm max} (0,x+2, \underline{2x+2},3x+4, 4x+4) &=& {\rm max} (0,x+2, \underline{2x+3},3x+4, 4x+4),\\
{\rm max} (0,x-2, \underline{2x-2},3x-4, 4x-4) &=& {\rm max} (0,x-2, \underline{2x-3},3x-4, 4x-4),\\
{\rm max} (x,x+1, \underline{3x+2},3x+3, 5x+4) &=& {\rm max} (x,x+1, \underline{2x+2},3x+3, 5x+4),\\
{\rm max} (x,x-1, \underline{3x-2},3x-3, 5x-4) &=& {\rm max} (x,x-1, \underline{2x-2},3x-3, 5x-4).
\end{eqnarray*}
In each equation we have underlined those terms which occur on one side but not the other, and it is straightforward to see that these terms never
affect the value of the maximum. For example, in the right-hand-side of the first equation, $2x+3$ cannot simultaneously strictly exceed both $x+2$ and $3x+4$.
\end{example}

\section{The Bicyclic Monoid}\label{sec_bicyclic}

The \textit{bicyclic monoid}  $\mathcal{B} = \langle p,q \mid pq=1 \rangle$ is an inverse monoid which is ubiquitous in almost all areas of infinite semigroup theory, and indeed in many other areas of mathematics. Identities in the bicyclic monoid have been extensively studied; Adjan~\cite{A66} established that the identity $AB^2A^2BAB^2A= AB^2A BA^2B^2A$ is a minimal length identity for this monoid.  Shleifer~\cite{Shl90} has shown (with computer assistance) that, up to relabelling of letters and exchanging which word appears on the left or right of the equality, there is just one other identity of minimal length: namely the one considered in Example~\ref{example_identity} above. Jones~\cite{J87} has shown that a semigroup variety contains a simple semigroup that is not bisimple if and only if it contains the bicyclic semigroup. Motivated by this work, Pastijn~\cite{P06} gave an elegant geometric characterisation of the identities which hold in the bicyclic monoid in terms of the properties of associated polyhedral complexes. Most recently Shneerson \cite{Shn15} has provided a family of interesting examples of semigroups of cubic growth which satisfy the same identities as the bicyclic monoid.

In this section we apply Theorem~\ref{thm_kletter} to prove that the identities satisfied by $UT_2(\trop)$ are precisely those satisfied by the bicyclic monoid, thus establishing that $UT_2(\trop)$ and several other associated semigroups all generate the same variety as $\mathcal{B}$.

\begin{theorem}
\label{main_theorem}
The bicyclic monoid $\mathcal{B}$ and the upper triangular tropical matrix monoid $UT_2(\trop)$ satisfy exactly the same semigroup identities.
\end{theorem}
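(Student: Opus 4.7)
The forward direction is immediate: since $\mathcal{B}$ embeds into $UT_2(\trop)$ (for instance via $q^a p^b \mapsto \bigl(\begin{smallmatrix} 0 & b \\ -\infty & b-a \end{smallmatrix}\bigr)$, as already observed by Izhakian--Margolis), every identity of the matrix monoid restricts to one of $\mathcal{B}$. The substantive content is the converse: if $w=v$ fails in $UT_2(\trop)$, then it also fails in $\mathcal{B}$.

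I first dispose of the easy sub-case where $w$ and $v$ have different contents: choose $s \in \Sigma$ with $|w|_s \neq |v|_s$ and let $\Sigma^+ \to \mathcal{B}$ send $s \mapsto p$ and every other letter to $pq = 1$. Then $w$ and $v$ map to distinct powers of $p$, and $\langle p\rangle$ is a free monogenic subsemigroup of $\mathcal{B}$, so the identity already fails.

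For the main case, assume $w$ and $v$ have the same content. By Theorem~\ref{thm_kletter} there exist $t_0 \in \Sigma$ and $\underline{x} \in \ft^\Sigma$ with $f_{t_0}^w(\underline{x}) \neq f_{t_0}^v(\underline{x})$. The polynomials $f_t^u$ are $0$-flat and hence continuous and positively homogeneous of degree one (Lemma~\ref{lem:harmony}), so I may replace $\underline{x}$ first by a rational point and then, by clearing denominators, by an integer point still witnessing the inequality. I then look for a bicyclic morphism $\phi : \Sigma^+ \to \mathcal{B}$ of the form $\phi(s) = q^{a_s} p^{b_s}$ with $a_s,b_s \in \mathbb{N}_0$. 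A direct computation in $UT_2(\trop)$ (analogous to Lemma~\ref{lemma_topright}, but tracking the non-zero bottom-right contributions $b_s - a_s$) yields the key formula
\[
\phi(u)_{1,2} \;=\; \sum_{s\in\Sigma} |u|_s\,(b_s - a_s) \;+\; \max_{t\in\Sigma}\bigl(a_t + f_t^u(\underline{y})\bigr)
\]
for every word $u$, where $y_s = a_s - b_s$. Since $w$ and $v$ have the same content, the linear sum agrees for $u = w$ and $u = v$, as do the (1,1) and (2,2) entries; so it suffices to arrange the outer maxima to differ. I therefore take $y_s = x_s$ (i.e.\ $b_s = a_s - x_s$), set $a_s = \max(0, x_s)$ for $s \neq t_0$, and $a_{t_0} = \max(0, x_{t_0}) + N$ for a sufficiently large integer $N$. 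Then $a_s, b_s \in \mathbb{N}_0$, and for $N$ large enough the outer maximum is uniquely attained at $t = t_0$ for both words, producing a discrepancy of $f_{t_0}^w(\underline{x}) - f_{t_0}^v(\underline{x}) \neq 0$ in $\phi(w)_{1,2} - \phi(v)_{1,2}$. Hence $\phi(w) \neq \phi(v)$ and the identity fails in $\mathcal{B}$.

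The main technical step is the explicit formula above linking $\phi(u)_{1,2}$ to the polynomials $f_t^u$, together with the sign bookkeeping needed so that the witness supplied by Theorem~\ref{thm_kletter} transfers cleanly to the bicyclic morphism; once that is in place, arranging the non-negativity constraints $a_s, b_s \geq 0$ simultaneously with the domination of the $t_0$-term is routine.
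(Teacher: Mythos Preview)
Your proof is correct and follows essentially the same strategy as the paper: dispose of the different-content case trivially, invoke Theorem~\ref{thm_kletter} to obtain a witness $(t_0,\underline{x})$, perturb to an integer point, and then make one off-diagonal parameter large so that the $t_0$-term dominates the maximum defining the $(1,2)$-entry. The only difference is bookkeeping: the paper normalises to bottom-right entry $0$, applies Lemma~\ref{lemma_topright}, and then uses the scaling Lemma~\ref{lemma_independentscale} (hence the passage to \emph{even} integers) to land in the image of its embedding $\rho$, whereas your direct formula for $\phi(u)_{1,2}$ absorbs the content term explicitly and reaches the bicyclic morphism without that detour.
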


\begin{proof}
It is well known and easy to see that every element of the bicyclic monoid can be written uniquely in the form $q^i p^j$ for some $i, j \in \mathbb{N}_0$.
Izhakian and Margolis \cite{IM10} noted that the map:
$$\rho \ \ : \ \ \mathcal{B} \to UT_2(\trop), \ \ q^i p^j \mapsto \left(\begin{array}{c c}
 \label{bicyclicmat}
i-j& i+j\\
-\infty& j-i
\end{array}\right)$$
is a semigroup embedding of $\mathcal{B}$ into $UT_2(\trop)$; of course, it also gives an embedding into $UT_2(\ft)$.

It is immediate from the existence of this embedding that any identity which holds in $UT_2(\trop)$ must hold in $\mathcal{B}$. It remains to show that any identity which holds in $\mathcal{B}$ must also hold in $UT_2(\trop)$. We shall show the contrapositive.
To this end, suppose that a semigroup identity $w = v$ over finite alphabet $\Sigma$ does not hold in $UT_2(\trop)$.
Clearly we may assume that $w$ and $v$ have the same content; indeed if $z \in \Sigma$ is such that $|w|_z \neq |v|_z$ then the identity is falsified in $\mathcal{B}$ by (for example) the morphism sending $z$ to $p$ and every other letter to the identity element.

By Theorem~\ref{thm_kletter} there exists
$\underline{x} \in \mathbb{FT}^\Sigma$ and $t \in \Sigma$ such that $f_t^w(\underline{x}) \neq f_t^v(\underline{x})$. Suppose without loss of generality that $f_t^w(\underline{x}) > f_t^v(\underline{x})$. Since both functions are piecewise linear in the components of $\underline{x}$, there is an open neighbourhood (in the usual Euclidean topology on $\mathbb{FT}^\Sigma$ considered as a set of finite dimensional vectors over $\mathbb{R}$) of $\underline{x}$ in which every vector satisfies the given inequality. This neighbourhood must contain a vector in $\mathbb{Q}^\Sigma$ so by replacing $\underline{x}$ with such a vector, we may assume without loss of generality that the components of $\underline{x}$ are all rational.

Let $d$ denote the least common multiple of the denominators in the lowest forms of the entries $x_s$ for $s \in \Sigma$. Since $f_t^w$ and $f_t^v$ are $0$-flat, it follows from Lemma \ref{lem:harmony} that $f_t^w(2d\underline{x}) > f_t^v(2d\underline{x})$. Hence, by replacing $\ul{x}$ with $2d\underline{x}$, we may further assume, again without loss of generality, that the components of $\underline{x}$ are even integers.

Now for each $s \in \Sigma$, choose a non-negative even integer $x_s'$ greater than $x_s$, in such a way that $x_t'$ is very large relative to all
other $x_s'$. Define a morphism $\psi : \Sigma^+ \to UT_2(\ft)$ by
$$\psi(s) = \left(\begin{array}{c c}
x_s & x_s'\\
-\infty& 0
\end{array}\right) \mbox{ for all } s \in \Sigma.$$
By Lemma~\ref{lemma_topright} we have
$$\psi(w)_{1,2} \ = \ \bigoplus_{s \in \Sigma} x_s' \otimes f_s^w(\ul{x}) \ = \ x_t' \otimes f_t^w(\ul{x}),$$
provided $x_t'$ was chosen sufficiently large, and similarly
$$\psi(v)_{1,2} \ = \ \bigoplus_{s \in \Sigma} x_s' \otimes f_s^v(\ul{x}) \ = \ x_t' \otimes f_t^v(\ul{x}) \ \neq \ \psi(w)_{1,2},$$
where the inequality is because $f_t^w(\underline{x}) \neq f_t^v(\underline{x})$.
So the morphism $\psi$ falsifies the identity in $UT_2(\ft)$.

Next for each $s \in \Sigma$ let $i_s = \frac{1}{2} x_s'$, and $j_s = \frac{1}{2}(x_s'-x_s)$. Then $i_s$ is non-negative (because $x_s'$ is non-negative) and
integer (because $x_s'$ is even), while $j_s$ is non-negative (because $x_s' > x_s$) and integer (because $x_s'$ and $x_s$ are even).
Define a morphism $\phi : \Sigma^+ \to \operatorname{im} \rho \subseteq \uttt$ by
$$\phi(s) \ = \ \rho(q^{i_s} p^{j_s}) =
\left(\begin{array}{c c}
i_s - j_s & i_s + j_s \\
-\infty& j_s - i_s
\end{array}\right) \ = \ (j_s - i_s) \otimes \left( \begin{array}{cc}
x_s & x_s' \\
-\infty& 0
\end{array}\right)$$
where the last equality follows from the definitions of $i_s$ and $j_s$. Because of the last equality, we see that $\psi$ can be obtained from
$\phi$ by the construction in Lemma~\ref{lemma_independentscale}, with $\mu_s = i_s - j_s$ for each $s$. Hence, by the contrapositive of Lemma~\ref{lemma_independentscale}, $\phi$ falsifies the identity.

But $\operatorname{im} \phi \subseteq \operatorname{im} \rho$ which is isomorphic to $\mathcal{B}$, so the identity
cannot hold in $\mathcal{B}$.
\end{proof}

\begin{corollary}
The subsemigroups of $\uttt$ obtained by restricting the on-and-above diagonal entries to be respectively in $\mathbb{Q}$, $\mathbb{Z}$, $\mathbb{N}_0$,
$\mathbb{Q} \cup \lbrace -\infty \rbrace$, $\mathbb{Z} \cup \lbrace -\infty \rbrace$ and $\mathbb{N}_0 \cup \lbrace -\infty \rbrace$ all satisfy
exactly the same identities as $\mathcal{B}$ and $\uttt$.
\end{corollary}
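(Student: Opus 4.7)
The plan is to show that each of the six listed subsemigroups $S$ satisfies the same identities as $\uttt$; combined with Theorem~\ref{main_theorem}, this yields the corollary. One inclusion is immediate because $S \subseteq \uttt$. For the other, I would show that any identity $w = v$ failing in $\uttt$ also fails in $S$.

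For the four subsemigroups whose allowed on-and-above-diagonal entries contain $\mathbb{Z}$, no additional work is required: the falsifying morphism $\phi$ produced at the end of the proof of Theorem~\ref{main_theorem} takes values in $\operatorname{im}\rho \subseteq UT_2(\mathbb{Z})$, and so automatically lands inside each of the subsemigroups $UT_2(\mathbb{Q})$, $UT_2(\mathbb{Z})$, $UT_2(\mathbb{Q}\cup\{-\infty\})$ and $UT_2(\mathbb{Z}\cup\{-\infty\})$, falsifying the identity there.

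The $\mathbb{N}_0$ and $\mathbb{N}_0\cup\{-\infty\}$ cases need an extra step, since $\operatorname{im}\rho$ contains matrices with negative diagonal entries. I would first dispose of the subcase in which $w$ and $v$ have different content, say $|w|_z \neq |v|_z$: mapping $z$ to $\begin{pmatrix} 1 & 0 \\ -\infty & 0 \end{pmatrix}$ and every other letter to $\begin{pmatrix} 0 & 0 \\ -\infty & 0 \end{pmatrix}$ gives a morphism into $UT_2(\mathbb{N}_0)$ whose image of any word $u$ has $(1,1)$-entry $|u|_z$, so the identity is falsified. Otherwise $w$ and $v$ have the same content, and I would start from the Theorem~\ref{main_theorem} morphism $\phi(s) = \rho(q^{i_s}p^{j_s})$, then apply Lemma~\ref{lemma_independentscale} with the non-negative integer scalars $\mu_s = |i_s - j_s|$. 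The resulting morphism $\phi'(s) = \mu_s \otimes \phi(s)$ has on-and-above-diagonal entries $\mu_s + (i_s-j_s)$, $\mu_s + (i_s+j_s)$ and $\mu_s + (j_s-i_s)$, each a non-negative integer, so $\phi'$ maps into $UT_2(\mathbb{N}_0)$; and because $w,v$ have the same content, applying Lemma~\ref{lemma_independentscale} to $\phi'$ with the inverse scalars $-\mu_s$ shows, via its contrapositive, that $\phi'$ still falsifies $w=v$.

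The only genuine obstacle is the $\mathbb{N}_0$ case, where $\mathcal{B}$ does not embed inside $S$ under $\rho$; this is resolved by the independent-scaling freedom of Lemma~\ref{lemma_independentscale}, which is available precisely because the content-mismatch situation has been handled separately.
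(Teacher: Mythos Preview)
Your proposal is correct and follows essentially the same line as the paper: for the four subsemigroups containing $\mathbb{Z}$-entries you use that the falsifying morphism from Theorem~\ref{main_theorem} already lands in $\operatorname{im}\rho \subseteq UT_2(\mathbb{Z})$, and for the $\mathbb{N}_0$ cases you rescale via Lemma~\ref{lemma_independentscale} to push the entries into $\mathbb{N}_0$. The only difference is cosmetic: you treat the different-content subcase for $\mathbb{N}_0$ explicitly with a concrete morphism and give explicit scalars $\mu_s = |i_s - j_s|$, whereas the paper simply observes that every matrix in $UT_2(\mathbb{Z})$ is a tropical scaling of one in $UT_2(\mathbb{N}_0)$ and invokes the contrapositive of Lemma~\ref{lemma_independentscale} (implicitly relying on the same-content hypothesis, which your version makes more transparent).
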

\begin{proof}
Write $UT_2(X)$ for the given semigroups, where $X = \mathbb{Q}$, $\mathbb{Z}$, $\mathbb{N}_0$, $\mathbb{Q} \cup \lbrace -\infty \rbrace$, $\mathbb{Z} \cup \lbrace -\infty \rbrace$ or $\mathbb{N}_0 \cup \lbrace -\infty \rbrace$.
The semigroups $UT_2(\mathbb{Q})$, $UT_2(\mathbb{Z})$, $UT_2(\mathbb{Q} \cup \lbrace -\infty \rbrace)$ and $UT_2(\mathbb{Z} \cup \lbrace -\infty \rbrace)$ are all contained in $\uttt$ and contain the image of the embedding $\rho$, which is isomorphic to $\mathcal{B}$.

The semigroups $UT_2(\mathbb{N}_0)$ and $UT_2(\mathbb{N}_0 \cup \lbrace -\infty \rbrace)$ are again contained in $\uttt$, and so satisfy all of the given identities. Conversely, notice that every matrix in $UT_2(\mathbb{Z})$
is a tropical scaling of a matrix in $UT_2(\mathbb{N}_0)$. By the previous paragraph, any identity which does not hold in $\uttt$ is falsified by some morphism to $UT_2(\mathbb{Z})$, and now the contrapositive of
Lemma~\ref{lemma_independentscale} yields a morphism falsifying the identity with image contained in $UT_2(\mathbb{N}_0)$, and hence
also in $UT_2(\mathbb{N}_0 \cup \lbrace -\infty \rbrace)$.
\end{proof}

The bicyclic monoid is well known (see for example \cite[Section 1.6]{H95}) to be isomorphic to $\mathbb{N}_0 \times \mathbb{N}_0$ under
the multiplication
$$(a,b)(c,d) \ = \ (a-b+{\rm max}(b,c),d-c+{\rm max}(b,c)).$$
Several natural extensions of this construction  have been considered (see for example \cite{G08, TaylorThesis, W68}). We write $\mathcal{B}_{\mathbb{Z}}:=\mathbb{Z}\times \mathbb{Z}$, $\mathcal{B}_{\mathbb{Q}}:=  \mathbb{Q}\times \mathbb{Q}$ and $\mathcal{B}_{\mathbb{R}}=\mathbb{R}\times \mathbb{R}$ to denote the semigroups with completely analogous multiplication, noting the obvious embeddings $\mathcal{B} \subseteq \mathcal{B}_{\mathbb{Z}} \subseteq \mathcal{B}_{\mathbb{Q}} \subseteq \mathcal{B}_{\mathbb{R}}$.

\begin{corollary}\label{cor_bicyclic}
The semigroups $\mathcal{B}_{\mathbb{Z}}, \mathcal{B}_{\mathbb{Q}} $and  $\mathcal{B}_{\mathbb{R}}$ satisfy exactly the same semigroup identities as $\mathcal{B}$ and $\uttt$.
\end{corollary}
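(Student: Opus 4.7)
The strategy is to reduce everything to Theorem \ref{main_theorem} via a chain-of-inclusions argument combined with a single extension of the Izhakian--Margolis embedding $\rho$. The containments $\mathcal{B} \subseteq \mathcal{B}_{\mathbb{Z}} \subseteq \mathcal{B}_{\mathbb{Q}} \subseteq \mathcal{B}_{\mathbb{R}}$ make one direction automatic: any identity satisfied in $\mathcal{B}_{\mathbb{R}}$ must be satisfied in each subsemigroup, in particular in $\mathcal{B}$, whose identities coincide with those of $\uttt$ by Theorem \ref{main_theorem}. It therefore suffices to prove the reverse, namely that every identity holding in $\uttt$ already holds in $\mathcal{B}_{\mathbb{R}}$.

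To this end, I would extend $\rho$ by \emph{the same formula} to all of $\mathcal{B}_{\mathbb{R}}$, defining
$$\tilde{\rho} : \mathcal{B}_{\mathbb{R}} \to \uttt, \quad (a,b) \mapsto \begin{pmatrix} a-b & a+b \\ -\infty & b-a \end{pmatrix},$$
and check that $\tilde{\rho}$ is still a semigroup embedding. Injectivity is immediate, since the top row yields $a+b$ and $a-b$ and hence both $a$ and $b$. The main step is verifying the homomorphism property with $a,b,c,d \in \mathbb{R}$: using the extended multiplication rule $(a,b)(c,d) = (a-b+\max(b,c),\, d-c+\max(b,c))$, the only non-trivial entry to check is the $(1,2)$-entry of the tropical product $\tilde{\rho}(a,b)\otimes\tilde{\rho}(c,d)$, which equals $\max\bigl((a-b)+(c+d),\, (a+b)+(d-c)\bigr) = a+d+|b-c|$. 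Rewriting $|b-c|=2\max(b,c)-b-c$ matches this with $(a-b+\max(b,c))+(d-c+\max(b,c))$, as required. The $(1,1)$, $(2,1)$ and $(2,2)$ entries pose no difficulty.

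With $\tilde{\rho}$ in hand, $\mathcal{B}_{\mathbb{R}}$ embeds into $\uttt$ and therefore inherits all of its identities; combined with the chain of inclusions above and Theorem \ref{main_theorem}, we conclude that $\mathcal{B}$, $\mathcal{B}_{\mathbb{Z}}$, $\mathcal{B}_{\mathbb{Q}}$, $\mathcal{B}_{\mathbb{R}}$ and $\uttt$ all satisfy exactly the same semigroup identities. There is no real obstacle here: the only genuine piece of work is the routine identification $\max(u,v) = \tfrac{u+v+|u-v|}{2}$ used to verify that the Izhakian--Margolis formula continues to be multiplicative once the non-negative integer constraint is dropped.
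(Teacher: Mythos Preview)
Your proposal is correct and follows essentially the same approach as the paper: the paper also defines the extended embedding $(a,b)\mapsto\begin{pmatrix} a-b & a+b \\ -\infty & b-a \end{pmatrix}$ of $\mathcal{B}_{\mathbb{R}}$ into $\uttt$, invokes the chain $\mathcal{B}\hookrightarrow\mathcal{B}_{\mathbb{Z}}\hookrightarrow\mathcal{B}_{\mathbb{Q}}\hookrightarrow\mathcal{B}_{\mathbb{R}}\hookrightarrow\uttt$, and concludes via Theorem~\ref{main_theorem}. The only difference is that you carry out the homomorphism verification explicitly, whereas the paper leaves it as ``straightforward to check''.
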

\begin{proof}
It is straightforward to check that the map
$$\phi \ : \ \mathcal{B}_{\mathbb{R}} \rightarrow UT_2(\trop), \ (a,b) \mapsto \left(\begin{array}{c c}
a-b & a+b \\
-\infty& b-a
\end{array}\right)$$
is an embedding. Thus we have embeddings
$$\mathcal{B} \hookrightarrow \mathcal{B}_{\mathbb{Z}} \hookrightarrow \mathcal{B}_{\mathbb{Q}} \hookrightarrow \mathcal{B}_{\mathbb{R}}\hookrightarrow UT_2(\trop).$$
Since $\mathcal{B}$ and $UT_2(\trop)$ satisfy the same identities, the result follows.
\end{proof}

\section{$UT_n(\mathbb{T})$ and Chain-Structured Tropical Matrix Semigroups}
\label{sec_chain}

In this section we extend the results of Section \ref{sec_2x2identities} to a more general class of semigroups $\Gamma(\trop)$ and $\Gamma(\ft)$ (including $UT_n(\mathbb{T})$ and $UT_n(\ft)$ for $n > 2$). We say that  $\Gamma(\trop)$ and $\Gamma(\ft)$ are \textit{chain-structured tropical matrix semigroups} if $\Gamma$ is a partial order with an upper bound on the length of chains. For the rest of this section, we assume that $\Gamma$ has these properties, and let $n$ be the length of the longest chain in $\Gamma$. (Thus there exists a sequence $v_1 \leq v_2 \leq \cdots \leq v_k$ of $k$ distinct elements of $\Gamma$ if and only if $k \leq n$.)

By a \textit{$k$-vertex walk} (or \textit{walk of vertex length} $k$) in $\Gamma$ we mean a $k$-tuple $(v_1, \dots, v_k)$ such that $v_1 \leq v_2 \leq \dots \leq v_k$. A \textit{$k$-vertex path} (or \textit{path of vertex length $k$}) is a $k$-vertex walk in which consecutive vertices (and hence all vertices) are distinct.

Let $w$ be a word over the alphabet $\Sigma$. For $0 \leq p < q \leq |w|+1$ and $s \in \Sigma$ we
define
$$\beta_s^w(p, q) \ = \ | \lbrace i \in \mathbb{N} \mid p < i < q, w_i = s \rbrace |$$
to be the number of occurrences of $s$ lying strictly \emph{between} $w_{p}$ and $w_{q}$. For each $u \in \Sigma^*$ with $|u| \leq n-1$ and each $(|u|+1)$-vertex path $\rho=(\rho_0, \rho_1, \ldots, \rho_{|u|})$ in $\Gamma$, we define a ($0$-flat) formal tropical polynomial having variables $x(s,i)$ for each letter $s \in \Sigma$ and vertex $i \in \Gamma$ as follows:
\begin{eqnarray*}
f_{u, \rho}^{w} = \ \bigoplus \bigotimes_{s\in \Sigma}\bigotimes_{k=0}^{|u|} x(s, \rho_k)^{\beta_s^w(\alpha_k, \alpha_{k+1})},
\end{eqnarray*}
where the sum ranges over all $0=\alpha_0< \alpha_1<\cdots<\alpha_{|u|}<\alpha_{|u|+1}=|w|+1$ such that $w_{\alpha_k}=u_k$ for $k=1, \ldots, |u|$. Here the powers are to be interpreted tropically, and a maximum taken over the empty set is taken to be $-\infty$. Thus it is easy to see that $f_{u, \rho}^{w} \neq -\infty$ if and only if $u$ is a scattered subword of $w$ of length equal to the path $\rho$. Note that taking $u$ to be the empty word forces $\rho = (\rho_0)$ for some $\rho_0 \in \Gamma$ and hence $f_{u, \rho}^w = \bigotimes_{s \in \Sigma} x(s, \rho_0)^{|w|_s}$, which is completely determined by the content of $w$.

In general, it is clear that the number of choices for the $\alpha_i$'s is bounded above by $|w|^{|u|}$, which in turn is bounded above by $|w|^{n-1}$. Thus, fixing $\Gamma$ (and hence $n$), it is easy to verify that the number of terms in the formal tropical polynomial $f_{u, \rho}^{w}$ is polynomial in $|w|$; this will be important in Section \ref{sec_complexity}.

\begin{lemma}\label{lemma_entriesGamma}
Let $\phi : \Sigma^+ \to \Gamma(\trop)$ be a morphism, and define $\underline{x} \in \mathbb{T}^{\Sigma \times \Gamma}$ by
$$\underline{x}(s,i) \ = \ \phi(s)_{i,i}.$$
Then for any word $w \in \Sigma^+$ and vertices $i, j \in \Gamma$ we have
\begin{equation}\label{eq1}
\phi(w)_{i,j} \ = \ \bigoplus_{\substack{u \in \Sigma^*,\\ |u| \leq n-1}}\bigoplus_{\rho \in \Gamma_{i,j}^{|u|}} \left(\bigotimes_{k=1}^{|u|} \phi(u_k)_{\rho_{k-1}, \rho_k}\right) \otimes f_{u, \rho}^w(\underline{x}),
\end{equation}
where $\Gamma^{|u|}_{i,j}$ denotes the set of all $(|u|+1)$-vertex paths from $i$ to $j$ in $\Gamma$.
\end{lemma}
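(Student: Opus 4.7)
The plan is a direct calculation: expand the matrix product in $\Gamma(\trop)$ and reorganise the summation. First, I would write the iterated tropical product
$$\phi(w)_{i,j} \ = \ \bigoplus_{i = v_0, v_1, \ldots, v_{|w|} = j} \bigotimes_{k=1}^{|w|} \phi(w_k)_{v_{k-1}, v_k},$$
where the outer supremum ranges over all sequences of elements of $\Gamma$ starting at $i$ and ending at $j$. Because entries of matrices in $\Gamma(\trop)$ are $-\infty$ off the relation $\leq$, only the weakly increasing walks $i = v_0 \leq v_1 \leq \cdots \leq v_{|w|} = j$ contribute non-trivially. The goal is to reorganise this sum according to the sequence of \emph{distinct} vertices visited along the walk.

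The bookkeeping step is this: each such walk factors uniquely as a pair consisting of a strictly increasing path $\rho = (\rho_0, \ldots, \rho_m)$ of distinct vertices of $\Gamma$ (with $\rho_0 = i$, $\rho_m = j$) together with the positions $0 < \alpha_1 < \cdots < \alpha_m \leq |w|$ at which the walk makes its ascents (so that $v_{\alpha_k - 1} = \rho_{k-1}$ and $v_{\alpha_k} = \rho_k$). Since $\Gamma$ contains no chain of length exceeding $n$, the path $\rho$ has at most $n$ vertices, hence $m \leq n-1$. Writing $\alpha_0 = 0$, $\alpha_{m+1} = |w|+1$, and $u = w_{\alpha_1} \cdots w_{\alpha_m} \in \Sigma^*$, the positions split into the $m$ transition positions (contributing factors $\phi(u_k)_{\rho_{k-1}, \rho_k}$) and the interior positions $\alpha_k < i < \alpha_{k+1}$ at which the walk sits at $\rho_k$ (contributing factors $\phi(w_i)_{\rho_k, \rho_k} = \underline{x}(w_i, \rho_k)$). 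Collecting like factors, the $k$th interior block becomes $\bigotimes_{s \in \Sigma} \underline{x}(s, \rho_k)^{\beta_s^w(\alpha_k, \alpha_{k+1})}$.

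To finish, I would group the sum over walks first by $(u, \rho)$ and then, for each such pair, by the remaining choices of $\alpha_1 < \cdots < \alpha_m$ satisfying $w_{\alpha_k} = u_k$. The transition factor $\bigotimes_{k=1}^{|u|} \phi(u_k)_{\rho_{k-1}, \rho_k}$ depends only on $(u, \rho)$ and factors out of the inner sum, while the remaining inner supremum over admissible position tuples matches term-for-term the definition of $f_{u,\rho}^w(\underline{x})$. I do not expect any genuine obstacle beyond this bookkeeping: the identity is essentially the observation that iterated tropical matrix multiplication decomposes naturally along the chain structure of $\Gamma$, with the upper bound on chain length being precisely what forces $|u| \leq n - 1$ and makes the outer sum well-defined.
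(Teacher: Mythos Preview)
Your proposal is correct and follows essentially the same approach as the paper: both arguments rest on the bijection between $(|w|+1)$-vertex walks from $i$ to $j$ in $\Gamma$ and pairs consisting of a path $\rho$ (of at most $n$ vertices) together with a choice of transition positions $\alpha_1 < \cdots < \alpha_{|u|}$, with the diagonal factors accounted for by the $\beta_s^w$ exponents. The only cosmetic difference is direction---the paper starts from the right-hand side and expands $f_{u,\rho}^w$ to reach the walk expansion of $\phi(w)_{i,j}$, whereas you start from the walk expansion and regroup to recover the right-hand side---but the bookkeeping is identical.
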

\begin{proof}
Let $i$ and $j$ be vertices. Using the definition of the functions $f_{u,\rho}^w$, the value given to $\underline{x}$ and the distributivity of multiplication $\otimes$  over addition $\bigoplus$, the right-hand-side of \eqref{eq1} is equal to
\begin{align*}
\bigoplus_{\substack{u \in \Sigma^*,\\ |u| \leq n-1}} \bigoplus
\bigoplus_{\rho \in \Gamma_{i,j}^{|u|}} \left(\bigotimes_{k=1}^{|u|} \phi(u_k)_{\rho_{k-1}, \rho_k}\right) \otimes
\left( \bigotimes_{s\in \Sigma}\bigotimes_{k=0}^{|u|} (\phi(s)_{\rho_k, \rho_k})^{\beta_s^w(\alpha_k, \alpha_{k+1})} \right)
\end{align*}
where the unlabelled sum ranges over all $0=\alpha_0< \alpha_1<\cdots<\alpha_{|u|}<\alpha_{|u|+1}=|w|+1$ such that $w_{\alpha_k}=u_k$ for $k=1, \ldots, |u|$.
Notice that we are summing over all possible words $u$ of length less than $n$, and then over all scattered subwords of $w$ equal to $u$. Thus, we are simply summing
over all scattered subwords of $w$ of length less than $n$, so the above is equal to:
\begin{align*}
\bigoplus \bigoplus_{\rho \in \Gamma_{i,j}^l} \left(\bigotimes_{k=1}^{l} \phi(w_{\alpha_k})_{\rho_{k-1}, \rho_k}\right) \otimes
\left( \bigotimes_{s\in \Sigma}\bigotimes_{k=0}^{l} (\phi(s)_{\rho_k, \rho_k})^{\beta_s^w(\alpha_k, \alpha_{k+1})} \right)
\end{align*}
where the unlabelled sum is over all $0=\alpha_0< \alpha_1<\cdots<\alpha_{l+1}=|w|+1$ for some $l \leq n-1$
and we set $u = w_{\alpha_0} \dots w_{\alpha_l}$.

Now to each term in the above sum, defined by a choice of $\alpha_i$'s and a $\rho \in \Gamma_{i,j}^l$, we can associate
a $(|w|+1)$-vertex walk $(\sigma_0 = i, \dots, \sigma_{|w|} = j)$ in $\Gamma$ whose underlying path is $\rho$ and which transitions to vertex $\rho_k$ after $\alpha_{k}$ steps. Clearly every $(|w|+1)$-vertex walk from $i$ to $j$ arises exactly once in this way, and so effectively we are summing over all such walks.
In each term, the content of the left-hand parentheses gives a factor $\phi(w_q)_{\sigma_{q-1},\sigma_{q}}$ when $q = \alpha_k$ for some $k$,
while from the definition of the functions $\beta_s^w$, the content of the right-hand parentheses gives a factor $\phi(w_q)_{\sigma_{q-1},\sigma_q}$ for each $q$ not of this form. Thus, the above is simply equal to:
$$\bigoplus \bigotimes_{q=1}^{|w|} \phi(w_q)_{\sigma_{q-1},\sigma_q}$$
where the supremum is taken over all $(|w|+1)$-vertex walks $(i = \sigma_0, \sigma_1, \dots, \sigma_{|w|} = j)$ in $\Gamma$. But by the
definition of multiplication in $\Gamma(\trop)$, this is easily seen to be equal to $\left( \phi(w_1)\otimes \dots \otimes \phi(w_{|w|}) \right)_{i,j} \ = \ \phi(w)_{i,j}.$
\end{proof}

We are now ready to prove the main theorem of this section.

\begin{theorem}
\label{thm_kletterGamma}
Let $\Gamma$ be a partial order with maximum chain length $n$. Then the identity $w=v$ over alphabet $\Sigma$ is satisfied in $\Gamma(\trop)$ if and only if for every $u \in \Sigma^*$ with $|u| \leq n-1$ and every path $\rho$ of length $|u|$ in $\Gamma$ the tropical
polynomials $f_{u, \rho}^w$ and $f_{u, \rho}^v$ are equivalent.
\end{theorem}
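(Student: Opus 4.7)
The plan is to mirror the proof of Theorem \ref{thm_kletter}, using Lemma \ref{lemma_entriesGamma} in place of Lemma \ref{lemma_topright}. That lemma supplies, for any morphism $\phi : \Sigma^+ \to \Gamma(\trop)$, an expansion of the matrix entry $\phi(w)_{i,j}$ as a tropical sum indexed by pairs $(u', \rho')$ with $|u'| \leq n-1$ and $\rho'$ a $(|u'|+1)$-vertex path from $i$ to $j$; the generic term factors as a ``path weight'' $\bigotimes_k \phi(u'_k)_{\rho'_{k-1}, \rho'_k}$ (depending only on $\phi$ and $(u', \rho')$) times $f_{u', \rho'}^w(\underline{x})$, where $\underline{x}(s, i) := \phi(s)_{i,i}$.

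For the ``if'' direction I would suppose that every $f_{u', \rho'}^w$ is equivalent to $f_{u', \rho'}^v$. Given any $\phi$, the expansions of $\phi(w)_{i,j}$ and $\phi(v)_{i,j}$ share identical path weights term by term, while equivalence of the polynomials makes their evaluations at $\underline{x}$ agree. This immediately gives $\phi(w) = \phi(v)$, so the identity holds in $\Gamma(\trop)$.

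For the ``only if'' direction I would fix a pair $(u, \rho)$ and an $\underline{x} \in \ft^{\Sigma \times \Gamma}$ with $f_{u, \rho}^w(\underline{x}) \neq f_{u, \rho}^v(\underline{x})$ (possible since equivalence of tropical polynomials is characterised by agreement on the finitary part). I then construct $\phi$ to isolate the $(u, \rho)$ term in the Lemma \ref{lemma_entriesGamma} expansion of $\phi(w)_{\rho_0, \rho_{|u|}}$, as follows: place $\underline{x}(s,i)$ on the diagonal of $\phi(s)$; place $0$ at the entry $(\rho_{k-1}, \rho_k)$ of $\phi(u_k)$ for each $k = 1, \ldots, |u|$; and set every other off-diagonal entry to $-\infty$. (Diagonal variables not appearing in either polynomial may be assigned arbitrary bounded values.) The main verification is that any other $(u', \rho')$ contributes a path weight of $-\infty$: a non-vanishing contribution would force each edge of $\rho'$ to coincide with one of the chosen edges of $\rho$, and since $\rho$ has distinct vertices, consecutive edges of such a $\rho'$ must also be consecutive in $\rho$, so $\rho'$ has to be a contiguous sub-path of $\rho$; the endpoint constraint $\rho'_0 = \rho_0$, $\rho'_{|u'|} = \rho_{|u|}$ then pins down $(u', \rho') = (u, \rho)$. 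The conclusion will be $\phi(w)_{\rho_0, \rho_{|u|}} = f_{u, \rho}^w(\underline{x}) \neq f_{u, \rho}^v(\underline{x}) = \phi(v)_{\rho_0, \rho_{|u|}}$, falsifying the identity. The degenerate case $u = \varepsilon$, where $\rho = (\rho_0)$ is a single vertex, is handled identically: the only path from $\rho_0$ to itself is the trivial one, and the argument produces a disagreement on the diagonal.

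The hardest part will be precisely this combinatorial collapse of the sum. Both the argument that no ``shortcut'' path $\rho'$ other than $\rho$ survives, and the check that the empty-word case behaves correctly, rely on the partial-order structure of $\Gamma$ (specifically, the fact that paths have pairwise distinct vertices), so the reasoning has to be carried out carefully rather than being copied verbatim from the $n = 2$ case. Once that geometric observation is in place, both directions reduce to algebraic bookkeeping with Lemma \ref{lemma_entriesGamma}.
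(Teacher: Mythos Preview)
Your proposal is correct and follows essentially the same approach as the paper: both directions hinge on Lemma~\ref{lemma_entriesGamma}, and your falsifying morphism for the ``only if'' direction is identical to the paper's construction. You in fact supply more detail than the paper does on why the sum collapses to the single $(u,\rho)$ term (the paper simply asserts the conclusion), and your ``if'' direction works directly in $\Gamma(\trop)$ rather than first reducing to $\Gamma(\ft)$ via Proposition~\ref{prop_finitarywilldoGamma}; both are harmless variations.
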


\begin{proof}
Suppose first that $f_{u,\rho}^w(\underline{x})\neq f_{u,\rho}^v(\underline{x})$ for some choice of $u \in \Sigma^{+}$, $\rho$ (from $i$ to $j$) and $\underline{x} \in \mathbb{FT}^{\Sigma \times \Gamma}$. Define a morphism $\phi : \Sigma^+ \to \Gamma(\trop)$ by
$$\phi(s)_{p,p} \ = \ \underline{x}(s,p) \in \mathbb{FT}, \mbox{ for all } p \in \Gamma \mbox{ and } s \in \Sigma; \mbox{ and}$$
$$\phi(s)_{p,q} \ = \ \begin{cases}
0 & \mbox{if } s=u_i, p=\rho_{i-1}, q=\rho_i\\
-\infty & \mbox{otherwise}.
\end{cases}$$
Then by Lemma~\ref{lemma_entriesGamma},
$$\phi(v)_{i,j} \ = \ f_{u, \rho}^v(\ul{x}) \ \neq \ f_{u, \rho}^w(\ul{x}) \ = \ \phi(w)_{i,j},$$
and so the morphism $\phi$ falsifies the identity in $\Gamma(\mathbb{T})$.

Conversely, suppose that $f_{u, \rho}^w$ and $f_{u, \rho}^v$ are equivalent for all $u, \rho$ with $u\in \Sigma^*$, and $\rho$ a path of length $|u|$ through $\Gamma$. By Proposition~\ref{prop_finitarywilldoGamma} it suffices to show that the identity $w=v$ is satisfied by every morphism $\phi : \Sigma^+ \to \Gamma(\ft)$,
so let $\phi$ be such a morphism. Define $\underline{x} \in \mathbb{FT}^{\Sigma \times \Gamma}$ by $\underline{x}(s,i) = \phi(s)_{i,i}$.

Since $\phi$ is a morphism to $\Gamma(\ft)$, we know that $\phi(w)_{i,j}=-\infty = \phi(v)_{i,j}$ whenever $i \not\leq j$.

On the other hand, if $i \leq j$ then Lemma~\ref{lemma_entriesGamma} gives
$$\phi(w)_{i,j} \ = \ \bigoplus_{\substack{u \in \Sigma^*,\\ |u| \leq n-1}} \bigoplus_{\rho \in \Gamma_{i,j}^{|u|}} \left(\bigotimes_{k=1}^{|u|} \phi(u_k)_{\rho_{k-1}, \rho_k}\right) \otimes f_{u, \rho}^w(\underline{x}) \ = \ \phi(v)_{i,j}.$$
\end{proof}

\begin{theorem} \label{thm_UTGamma}
Let $\Gamma$ be a partial order with maximum chain length $n$. Then $\Gamma(\trop)$ satisfies exactly the same semigroup identities as $UT_n(\trop)$.
\end{theorem}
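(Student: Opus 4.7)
The plan is to invoke Theorem \ref{thm_kletterGamma} for both $\Gamma(\trop)$ and $UT_n(\trop)$, observing that the latter arises as $\Gamma'(\trop)$ for the totally ordered set $\Gamma' = \{1, \ldots, n\}$, which is itself a partial order of maximum chain length $n$. By Theorem \ref{thm_kletterGamma}, an identity $w = v$ holds in $\Gamma(\trop)$ (respectively, in $UT_n(\trop)$) if and only if for every $u \in \Sigma^*$ with $|u| \leq n-1$ and every path $\rho$ of vertex length $|u|+1$ in $\Gamma$ (respectively, in $\Gamma'$), the tropical polynomials $f_{u,\rho}^w$ and $f_{u,\rho}^v$ are equivalent.

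The key observation I would then make is that, for fixed $u$, whether $f_{u,\rho}^w \equiv f_{u,\rho}^v$ holds depends on $\rho$ only through its vertex length. Indeed, the polynomial $f_{u,\rho}^w$ involves only the variables $x(s, \rho_j)$ for $s \in \Sigma$ and $0 \leq j \leq |u|$; since $\rho$ is a path, the vertices $\rho_0, \ldots, \rho_{|u|}$ are pairwise distinct, so these constitute $|\Sigma|(|u|+1)$ mutually distinct and otherwise unconstrained variables. I would introduce abstract variables $y_{s,j}$ for $s \in \Sigma$ and $0 \leq j \leq |u|$, and define an abstract polynomial $F_u^w \in \trop[\ul{y}]$ (depending only on $u$ and $w$) such that $f_{u,\rho}^w$ is obtained from $F_u^w$ by the bijective substitution $y_{s,j} \mapsto x(s, \rho_j)$. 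It then follows that $f_{u,\rho}^w \equiv f_{u,\rho}^v$ if and only if $F_u^w \equiv F_u^v$, a condition that no longer refers to $\rho$ or to $\Gamma$.

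To conclude, I would verify that both $\Gamma$ and $\Gamma'$ admit paths of every vertex length $k$ with $1 \leq k \leq n$: in $\Gamma$ one selects any $k$ vertices from a maximum chain (which exists and has $n$ elements by hypothesis), while in $\Gamma'$ one simply takes $\{1, \ldots, k\}$. Hence the family of polynomial equivalences demanded by Theorem \ref{thm_kletterGamma} for the two semigroups coincides, both reducing to $\{F_u^w \equiv F_u^v : u \in \Sigma^*, \, |u| \leq n-1\}$, and the theorem follows. I do not anticipate any serious obstacle: the argument is essentially bookkeeping, resting on the point that, through the lens of Theorem \ref{thm_kletterGamma}, the only invariant of $\Gamma$ that matters is the set of chain lengths it realises, which is $\{1, \ldots, n\}$ in both cases.
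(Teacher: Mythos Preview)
Your proposal is correct and follows essentially the same approach as the paper's proof: both apply Theorem~\ref{thm_kletterGamma} to $\Gamma(\trop)$ and to $UT_n(\trop)$, and both observe that the equivalence $f_{u,\rho}^w \equiv f_{u,\rho}^v$ depends on $\rho$ only through its vertex length, via a bijective change of variables between paths of equal length. The paper phrases this slightly more concretely by fixing one maximal chain $(\rho_1,\dots,\rho_n)$ in $\Gamma$ and its truncations $\tau_i$, rather than introducing your abstract polynomials $F_u^w$, but the content is identical.
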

\begin{proof}
Let $\rho=(\rho_1, \ldots, \rho_n)$ be a maximal length path in $\Gamma$, and let $\tau_i = (\rho_1, \ldots, \rho_i)$ for $i=1, \ldots, n$. We note that if $\rho'$ and $\rho''$ are paths of the \emph{same length}, then $f_{u, \rho''}^w$ can be obtained from $f_{u, \rho'}^w$ by the change of variables $x(s, \rho'_i) \mapsto x(s, \rho''_i)$. Thus $f_{u,\rho'}^w = f_{u,\rho'}^v$ if and only if $f_{u,\rho''}^w = f_{u,\rho''}^v$.  It follows from this observation together with Theorem~\ref{thm_kletterGamma} that $w=v$ in $\Gamma(\mathbb{T})$ if and only if $f_{u,\tau_i}^w = f_{u,\tau_i}^v$ for all words $u \in \Sigma^*$ of length $i$ for all $i=0, \ldots, n-1$. In particular, we note that $w=v$ in $\Gamma(\mathbb{T})$ if and only if $w=v$ in $UT_{n}(\mathbb{T})$.\end{proof}

We remark that Theorem~\ref{thm_UTGamma} can also be deduced as a corollary of Theorem~\ref{thm_realisation} below (the proof
of which is independent) together with Birkhoff's HSP Theorem \cite{B35}.

\section{Chain-Structured Semigroups as Divisors of $UT_n(\mathbb{T})$}\label{sec_divisors}

As in the previous section, let $\Gamma$ be a partial order with finite maximum chain length $n$.
Theorem~\ref{thm_UTGamma} says that the chain-structured tropical matrix semigroup $\Gamma(\mathbb{T})$ satisfies exactly
the same identities as the upper triangular tropical matrix semigroup $UT_n(\mathbb{T})$.
By Birkhoff's HSP Theorem \cite{B35}, this means that $\Gamma(\mathbb{T})$ must be realisable as a homomorphic image of a subsemigroup of
a direct power of $UT_n(\mathbb{T})$. In this section we show how to construct an explicit realisation of $\Gamma(\mathbb{T})$ in
this way.

For each $k=1, \ldots, n$ and each $k$-vertex path $\rho$ in $\Gamma$, let $\Delta_\rho$ be an isomorphic copy of the semigroup $UT_k(\mathbb{T})$ but with rows and columns indexed by
the vertices occuring in $\rho$, and the order of vertices in $\rho$ used to determine the upper triangular structure in the obvious way (so that
for any $M \in \Delta_\rho$ we have $M_{i,j} = -\infty$ if $j$ comes strictly before $i$ in $\rho$).

Let $\Delta$ be the direct product of the semigroups $\Delta_\rho$ as $\rho$ varies over all directed paths in $\Gamma$. Given $D \in \Delta$ we write $D_\rho$ for projection of $D$ onto the $\rho$-coordinate (so $D_\rho \in \Delta_\rho$). For $i,j \in \rho$
we write $D_{\rho,i,j}$ for the $(i,j)$ entry of $D_{\rho}$.

Notice that since each $\Delta_\rho$ is isomorphic to $UT_k(\mathbb{T})$ for some $k \leq n$, $\Delta$ embeds naturally in a direct
power of $UT_n(\mathbb{T})$.

We define a function (which we do not claim to be a morphism)
$$\psi : \Gamma(\mathbb{T}) \to \Delta,  \ \ \ \psi(M)_{\rho,i,j} = M_{ij}.$$
This function is not surjective; indeed its image need not even be a subsemigroup of $\Delta$. Let $\Psi$ be the subsemigroup of $\Delta$ generated by the image of $\psi$ and define
$$\varphi : \Psi \to \Gamma(\mathbb{T}), \ \ \ \ \varphi(D)_{i,j} = {\rm sup}\{D_{\rho,i,j}: i,j \in \rho\}.$$
It is immediate from the definitions that $\psi$ is an injective function, and that $\varphi(\psi(M)) = M$ for all $M \in \Gamma(\mathbb{T})$. Moreover, the boundedness conditions on $\Gamma(\mathbb{T})$ ensure that $\varphi$ is well-defined on $\Psi$. We show that $\varphi$ is a surjective semigroup morphism.

\begin{theorem}\label{thm_realisation}
With notation as above, $\varphi$ is a surjective semigroup morphism from $\Psi$ (which is a subsemigroup
of $\Delta$ and hence embeds naturally in a direct power of $UT_n(\mathbb{T})$) onto $\Gamma(\mathbb{T})$.
\end{theorem}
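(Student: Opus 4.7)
The plan is to isolate a single identity from which the full theorem follows: for every $k \geq 1$ and all $M_1, \dots, M_k \in \Gamma(\trop)$, I would prove
$$\varphi\bigl(\psi(M_1) \otimes \cdots \otimes \psi(M_k)\bigr) \ = \ M_1 \otimes \cdots \otimes M_k.$$
Granting this, everything else follows at once. Every element of $\Psi$ is by construction a finite product of $\psi(M)$'s, so the right-hand side lying in $\Gamma(\trop)$ gives well-definedness; surjectivity is immediate from the $k=1$ case, since $\varphi \circ \psi = \mathrm{id}$ by direct inspection of the definitions; and the morphism property follows by writing $D = \psi(M_1) \cdots \psi(M_r)$ and $E = \psi(N_1) \cdots \psi(N_s)$, applying the identity to each of $D$, $E$, and $D \otimes E$, and concatenating on the $M$'s and $N$'s.

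To prove the key identity, I would fix $i, j \in \Gamma$ and compare $(i,j)$-entries. Iterating the definition of multiplication in $\Gamma(\trop)$, the right-hand entry expands as
$$\bigoplus_{(i_0,\dots,i_k)} \bigotimes_{\ell=1}^{k} (M_\ell)_{i_{\ell-1}, i_\ell}$$
over all walks from $i_0 = i$ to $i_k = j$ in $\Gamma$, where only the weakly increasing walks contribute a non-$-\infty$ term. Unfolding the coordinatewise product on $\Delta$ together with the definition of $\varphi$, the left-hand entry is the supremum, over paths $\rho$ of vertex-length at most $n$ passing through $i$ and $j$, of the same sum restricted to walks lying entirely in $\rho$. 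One inequality is tautological: any walk inside such a $\rho$ is also a walk in $\Gamma$. The crucial observation for the reverse is that the distinct vertices of any contributing weakly increasing walk form a chain of cardinality at most $n$ containing both $i$ and $j$, which listed in increasing order is itself one of the paths $\rho$ appearing on the left; the walk then lies inside this $\rho$ and so its contribution is already accounted for by the corresponding summand.

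The main obstacle I anticipate is the bookkeeping around unfolding the multiplication on $\Psi$: one must check that the upper-triangular structure on each $\Delta_\rho$, imposed by the linear order in which $\rho$ visits its own vertices, really forces $\psi(M)_\rho \otimes \psi(N)_\rho$ to coincide with the $\rho$-submatrix product of $M$ and $N$, and that the family of paths over which $\varphi$ takes its supremum is rich enough to cover every chain arising from a contributing walk. The case $i \not\leq j$ is then automatic: both sides of the key identity vanish at $(i,j)$, the right by the $\Gamma(\trop)$-structure and the left because no path can contain two incomparable vertices, so the supremum ranges over the empty set.
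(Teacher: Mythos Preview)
Your proposal is correct and follows essentially the same route as the paper: reduce everything to the identity $\varphi(\psi(M_1)\cdots\psi(M_k)) = M_1\otimes\cdots\otimes M_k$, then verify this entrywise by observing that both sides are suprema over $(k+1)$-vertex walks from $i$ to $j$, the left-hand side first restricting to walks inside a fixed path $\rho$ and then taking a supremum over $\rho$, with equality following because the support of any contributing walk is a chain of length at most $n$ and hence is contained in some path. The paper leaves the deduction of the morphism property from the key identity implicit, while you spell it out; the bookkeeping concerns you flag (compatibility of the upper-triangular structure on $\Delta_\rho$ with the $\rho$-submatrix of $M$, and the $i\not\leq j$ case) are genuine but routine, exactly as you anticipate.
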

\begin{proof}
Since every element of $\Psi$ has the
form $\psi(M_1) \cdots \psi(M_k)$ for some $M_1, \dots, M_k \in \Gamma(\mathbb{T})$ and we know that
$\varphi(\psi(M)) = M$ for all $M \in \Gamma(\mathbb{T})$, it suffices to take $M_1, \dots, M_k \in \Gamma(\mathbb{T})$ and show that
$$\varphi(\psi(M_1) \cdots \psi(M_k)) \ = \ M_1 \otimes \cdots \otimes M_k.$$

Consider the $(i,j)$ entry of the left-hand side. By definition, this is the supremum over all paths $\rho$ containing $i$ and $j$ of the $(i,j)$ entry of $(\psi(M_1) \dots \psi(M_k))_\rho \in \Delta_\rho$.
Since $\Delta$ is a direct product, we have
$$(\psi(M_1) \cdots \psi(M_k))_\rho \ = \ (\psi(M_1)_\rho)\otimes  \cdots \otimes (\psi(M_k))_\rho$$
in $\Delta_\rho$. By definition of matrix multiplication, the $(i,j)$ entry of this is the supremum over all
$(k+1)$-vertex walks $(i = w_0, w_1, \dots, w_{k-1}, w_k = j)$ within the vertex set of $\rho$ of
$$(M_1)_{w_0 w_1}\otimes \cdots \otimes(M_k)_{w_{k-1} w_k}.$$
But since every $(k+1)$-vertex walk is contained in the vertex set of \textit{some} path $\rho$, this means that the $(i,j)$ entry of the left-hand-side is the
supremum over all $(k+1)$-vertex walks $(i = w_0, w_1, \dots, w_{k-1}, w_k = j)$ in $\Gamma$ of
$$(M_1)_{w_0 w_1}\otimes  \cdots \otimes (M_k)_{w_{k-1} w_k}.$$
But by the definition of multiplication in $\Gamma(\mathbb{T})$, this is exactly the $(i,j)$ entry of the right-hand side.
\end{proof}
As mentioned above, the proof of Theorem~\ref{thm_realisation} is entirely independent of Theorem~\ref{thm_UTGamma}, and in fact the
latter can (as an alternative proof strategy) be deduced from the former.

\section{The Free Monogenic Inverse Monoid}\label{sec_monogenicinverse}

In this section we consider tropical representations of the free one-generated object in the category of inverse monoids. This monoid, which
we denote $\mathcal{I}$, admits several representations, but most conveniently for our purposes it is isomorphic (see
for example \cite[Chapter 5, Exercise 42]{H95}) to the set of
triples of integers:
$$\lbrace (i,j,k) \in \mathbb{Z}^3 \ \mid \ i, j \geq 0, \;\; -j \leq k \leq i \rbrace$$
with multiplication given by:
$$(i,j,k)(i',j',k') \ = \ ({\rm max}(i,i'+k), \ {\rm max}(j,j'-k), \ k+k').$$

Consider now the set $\Gamma = \lbrace 1,2,3 \rbrace$ equipped with the partial order $\preceq$ in which
$1, 2 \preceq 3$ but $1$ and $2$ are incomparable. It is easy to see, from the above representation, that there
is an embedding of semigroups
$$\mathcal{I} \hookrightarrow \Gamma(\ft) \subseteq UT_3(\trop), \ \ \  (i,j,k) \mapsto \left(\begin{array}{ c c c}
k & -\infty& i \\
-\infty& -k& j \\
-\infty& -\infty& 0
\end{array}\right).$$

Since $\Gamma$ has maximum chain length $2$, we can deduce by Theorem~\ref{thm_UTGamma} that $\mathcal{I}$ satisfies
all the identities satisfied by $UT_2(\trop)$, and hence by Theorem \ref{main_theorem}, all identities satisfied in $\mathcal{B}$. (In fact this was known as a consequence of work of Scheiblich \cite{Sch71} which showed that $\mathcal{I}$ can be embedded in a direct product of copies of $\mathcal{B}$.) The converse also holds: since $\mathcal{B}$ is itself a monogenic inverse monoid, it is a homomorphic image of $\mathcal{I}$, and hence satisfies all identities satisfied in the latter.

The above representation of $\mathcal{I}$ in $\Gamma(\ft)$ is of course also a representation in $UT_3(\trop)$. In view of the preceding remarks it is natural to ask if one can go one step better and find a faithful representation of $\mathcal{I}$ in $UT_2(\ft)$. It transpires that this cannot be done.

\begin{proposition}
The free monogenic inverse monoid $\mathcal{I}$ embeds in $UT_n(\trop)$ if and only if $n \geq 3$.
\end{proposition}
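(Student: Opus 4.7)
The ``if'' direction is immediate: the explicit embedding $\mathcal{I} \hookrightarrow \Gamma(\ft) \subseteq UT_3(\trop)$ exhibited above, composed with the block-diagonal embedding $UT_3(\trop) \hookrightarrow UT_n(\trop)$ that places the identity of $UT_{n-3}(\trop)$ in the bottom-right corner, handles every $n \geq 3$.

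For the ``only if'' direction, I would dispatch $n = 1$ immediately, since $UT_1(\trop) = \trop$ is commutative whereas $\mathcal{I}$ is not. The substantive case is $n = 2$, which I would prove by contradiction. Assume an embedding $\theta : \mathcal{I} \hookrightarrow UT_2(\trop)$ exists, and set $M := \theta(a)$ and $N := \theta(a^{-1})$. Because $(n, 0, 0) = a^n (a^{-1})^n$ and $(0, n, 0) = (a^{-1})^n a^n$ are pairwise distinct in $\mathcal{I}$ as $n$ ranges over $\mathbb{N}$, injectivity forces the two families $(M^n N^n)_{n \geq 1}$ and $(N^n M^n)_{n \geq 1}$ to consist of pairwise distinct matrices. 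The first step is to use the inverse-monoid relations $MNM = M$ and $NMN = N$ to constrain the diagonals: reading off the $(1,1)$ and $(2,2)$ entries yields, for each $i \in \{1, 2\}$, the dichotomy that either $M_{i,i} = N_{i,i} = -\infty$, or both are finite with $M_{i,i} + N_{i,i} = 0$.

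The second step is to invoke the elementary formula $X^n_{1,2} = X_{1,2} + (n-1)\max(X_{1,1}, X_{2,2})$, valid for every $X \in UT_2(\trop)$ and $n \geq 1$ by a straightforward induction, to compute $(M^n N^n)_{1,2}$ and $(N^n M^n)_{1,2}$ in closed form. In the case where $\alpha := M_{1,1}$ and $\delta := M_{2,2}$ are both finite, a direct calculation shows these $(1,2)$ entries are affine in $n$ with slopes $\max(0, \alpha - \delta)$ and $\max(0, \delta - \alpha)$ respectively, while the diagonal entries of both products are constantly $0$. The crux, and the main obstacle, is the elementary observation that at least one of these two slopes must vanish, which forces the corresponding sequence to be constant in $n$ and contradicts injectivity of $\theta$. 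The remaining degenerate cases, where some diagonal entry of $M$ is $-\infty$ (and, by the dichotomy, so is the corresponding diagonal entry of $N$), I would handle by direct computation: if both diagonals of $M$ are $-\infty$ then $M^2$ is the all-$-\infty$ matrix, so $M^n$ stabilises and $\theta$ cannot be injective on powers of $a$; in the mixed cases where exactly one diagonal entry of $M$ is finite, explicit calculation again shows that one of $(M^n N^n)_{1,2}$, $(N^n M^n)_{1,2}$ is constant in $n$, yielding the required contradiction.
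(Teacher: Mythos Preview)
Your argument is correct and complete. The diagonal dichotomy from $MNM=M$, $NMN=N$ is right; the power formula $X^n_{1,2} = X_{1,2} + (n-1)\max(X_{1,1},X_{2,2})$ is easily checked; and in the all-finite case your slope computation goes through (both terms in the maximum defining $(M^nN^n)_{1,2}$ have the same slope $\max(0,\alpha-\delta)$, so the maximum is genuinely affine). The degenerate cases also check out as you describe.

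However, your route is quite different from the paper's. The paper argues via the \emph{natural partial order on idempotents}. Using a classification of idempotents in $UT_2(\trop)$ (drawn from \cite{JK11}), it observes that in $UT_2(\trop)$ one cannot find two incomparable idempotents together with two further idempotents lying above both of them; but in $\mathcal{I}$ the idempotents are exactly the triples $(i,j,0)$ with order $(i,j,0)\leq(i',j',0)\iff i\geq i',\ j\geq j'$, so such a configuration is easy to exhibit (e.g.\ $(2,3,0)$ and $(3,2,0)$ below both $(2,2,0)$ and $(1,1,0)$). Since an embedding of inverse monoids preserves the idempotent order, this is an immediate obstruction. Your approach trades this structural one-line obstruction for a self-contained computation: you avoid importing the idempotent classification from another paper, at the cost of several explicit case analyses. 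Both work, but the paper's argument is shorter once the idempotent description is in hand and makes transparent \emph{why} the obstruction is order-theoretic, while yours is entirely elementary and could be carried out without knowing anything about Green's relations or the idempotent structure of $UT_2(\trop)$.
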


\begin{proof}
We have seen that $\mathcal{I}$ embeds into $UT_3(\trop)$ which in turn embeds in $UT_n(\trop)$ for all $n \geq 3$.  To see that $\mathcal{I}$ does not embed into $UT_2(\mathbb{T})$ we consider the structure formed by the idempotents of each semigroup under the natural partial order given by
$e \leq f $ if and only if $e=ef=fe$.

It
can readily be verified that every idempotent in $UT_2(\trop)$ is of one the forms
\[ \scalebox{0.85}{$Z=\left(\begin{array}{c c}
-\infty & -\infty\\
-\infty& -\infty
\end{array}\right), \ G_x=\left(\begin{array}{c c}
0 & x\\
-\infty& 0
\end{array}\right), \ E_x=\left(\begin{array}{c c}
0 & x\\
-\infty& -\infty
\end{array}\right), \ F_x=\left(\begin{array}{c c}
-\infty & x\\
-\infty& 0
\end{array}\right),$
}\]
where $x \in \mathbb{T}$, and that the natural partial order on idempotents is given by
\begin{itemize}
\item[(i)] $Z \leq X \leq G_{-\infty}$ for all idempotent elements $X$;
\item[(ii)] $E_x, F_x, G_x \leq G_y$ if and only if $y \leq x$; and
\item[(iii)] idempotents of the form $E_x$ and $F_x$ are incomparable with all remaining idempotents.
\end{itemize}
Notice that the only idempotents which lie strictly above two or more distinct idempotents are those of the form $G_x$, and that these idempotents are totally ordered. Thus, it is not possible within $UT_2(\trop)$ to find two incomparable idempotents and two other idempotents which lie strictly below both of them. In contrast,  $(2,3,0)$ and $(3,2,0)$ are incomparable idempotents of $\mathcal{I}$ which both lie above the idempotents $(3,3,0)$ and $(4,4,0)$. Since any embedding of semigroups strictly preserves the natural partial order, this means $\mathcal{I}$ cannot embed in $UT_2(\trop)$.
\end{proof}
(The above proof replaces an erroneous proof in the the published version of this article. The latter erred in claiming that the idempotents of the form $E_x$ and $F_x$ are incomparable with all other idempotents except $G_{-\infty}$ and $Z$. That this is false can be seen from the above proof. We are grateful to Duarte Ribeiro (private communication) for alerting us to the error.)

\section{Complexity of Checking Identities}\label{sec_complexity}

Deciding whether a given identity holds in a (fixed) semigroup is often computationally hard. It follows from a result of Murskii \cite{Murskii} that there is a semigroup for which this problem is undecidable. Even in small finite semigroups the problem may be intractable: indeed, it is coNP-complete for both the full transformation semigroup $T_3$ \cite{Almeida} and the symmetric group $S_5$ \cite{Horvath}.

In this section we show how our results can be used to derive efficient algorithms for checking whether a given identity holds in $UT_n(\mathbb{T})$, and hence (by Theorem~\ref{thm_UTGamma}) in chain-structured matrix semigroups in general. To be precise, the results above (Theorem~\ref{thm_kletter}, Corollary~\ref{cor_kletter} and Theorem~\ref{thm_kletterGamma}) allow us to reduce the problem to checking whether formal tropical polynomials define
the same function, and in this section we show how the latter problem can be reduced to (real) linear programming. For a fixed $n$
the resulting algorithms run in time polynomial in the size of the alphabet and the length of the identity, although the degree of the
polynomial rises (and hence the complexity increases exponentially) as $n$ grows.

One special case is of particular importance: our Main Theorem implies that the algorithm for $UT_2(\mathbb{T})$ can also be used to check (in polynomial time) whether a given identity holds in the bicyclic monoid (and hence also the free monogenic inverse monoid). Another algorithm for this problem was given by Pastijn \cite{P06};  in fact
although this algorithm arose from a different approach to an ostensibly different problem, it is very closely related to our own. No complexity analysis is given in \cite{P06}, but Pastijn's algorithm also essentially reduces to linear programming, and hence should
also be implementable in polynomial time.

We begin by establishing some basic properties of many-variable tropical polynomials. The ideas concerned are well-known to
max-plus algebraists in the one-variable case (see \cite[Section 5.1]{Butkovic}) and are increasingly studied by algebraic geometers in the many-variable case. However, we have not been able to locate the precise statements we require in the literature, so we present (without any particular claim of originality) a concise self-contained exposition.

A term in a formal tropical polynomial is called \textit{essential} if there is some value of the variables for which it is the only term to attain the maximum, and \textit{inessential} otherwise. For example, as discussed in Section~\ref{sec_defs}, $x$ is an inessential term in $x^{2} \oplus x \oplus 1$. The formal polynomial itself is called \textit{essential} if all of its terms are essential. We shall need the following observation, which follows from \cite[Proposition 1.5 and Lemma 1.6]{IM10}:

\begin{lemma}\label{lemma_essential}
Every formal tropical polynomial is equivalent to a unique essential formal tropical polynomial.
\end{lemma}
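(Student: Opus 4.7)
The plan is to identify each term of a formal tropical polynomial $f$ in variables $\underline{x}=(x_s)_{s \in X}$ with an affine function on $\mathbb{R}^X$: the term $c \bigotimes_s x_s^{a_s}$ corresponds to the affine map $L(\underline{x}) = c + \sum_s a_s x_s$. The function defined by $f$ on $\mathbb{FT}^X=\mathbb{R}^X$ is then the pointwise maximum of this finite family of affine functions, and in particular is piecewise affine and continuous. As remarked earlier in the paper, equivalence may be tested on $\mathbb{FT}^X$, so throughout I may restrict attention to this setting. Note that a term is essential precisely when its affine function strictly exceeds every other affine function of $f$ at some point, and since strict inequality among finitely many affine functions is an open condition, this is the same as strictly dominating on some nonempty open subset of $\mathbb{R}^X$.

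For existence, let $f'$ be obtained from $f$ by discarding every inessential term. I would establish $f \equiv f'$ by showing two things. First, the set $U \subseteq \mathbb{R}^X$ of points $\underline{x}$ at which some single term of $f$ strictly dominates all the others is open and dense: its complement is contained in the finite union of affine hyperplanes $\{L_i = L_j\}$ coming from pairs of distinct terms, and such a union is closed with empty interior. Second, at any $\underline{x} \in U$ the uniquely dominating term is essential by definition and so appears in $f'$, whence $f(\underline{x}) = f'(\underline{x})$. Since $f$ and $f'$ are both continuous and agree on the dense set $U$, they are equal. Moreover every term of $f'$ is still essential in $f'$, since a witness $\underline{x}_0$ for a term being essential in $f$ remains a witness in $f'$: removing strictly smaller-valued terms at $\underline{x}_0$ cannot destroy the strict domination there.

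For uniqueness, I would prove the stronger statement that the set of essential terms of $f$ is an invariant of the polynomial function. If $L$ is an essential term of any polynomial defining the function $f$, then by openness of strict domination, $L$ coincides with $f$ on a nonempty open set $V \subseteq \mathbb{R}^X$. Since an affine function is completely determined by its restriction to any nonempty open set, $L$ itself depends only on $f$. Hence any essential polynomial $g$ equivalent to $f$ has, as its set of terms, exactly the (finite) collection of affine functions that coincide with $f$ on some nonempty open set, and this is the same collection appearing in the $f'$ constructed in the existence step. Therefore $g = f'$.

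The main obstacle I anticipate is the careful justification that the set $U$ described above is dense, i.e.\ that the closed set on which two or more terms are tied has empty interior. This is a standard but genuinely required input from piecewise-affine geometry; everything else in the argument amounts to routine bookkeeping with continuity and with the definition of essentiality. One minor bookkeeping point also deserves attention: one must verify that $0$-flat conventions and the behaviour of terms at boundary points of $\mathbb{FT}^X$ do not change the conclusion, but because equivalence on $\mathbb{FT}^X$ already suffices, this reduces to the purely affine analysis on $\mathbb{R}^X$ just described.
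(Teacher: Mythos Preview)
The paper does not actually prove this lemma; it simply records that the statement follows from \cite[Proposition~1.5 and Lemma~1.6]{IM10}. Your proposal therefore supplies more than the paper does: a self-contained argument via the piecewise-affine description of tropical polynomial functions on $\mathbb{R}^X$. The existence half is correct as written. Because distinct terms of a formal polynomial (after the idempotent reduction built into $\trop[X]$) have distinct exponent vectors, the loci $\{L_i = L_j\}$ for $i \neq j$ are genuine affine hyperplanes, so the ``single strict maximiser'' set $U$ is open and dense, and continuity gives $f \equiv f'$. Your check that the retained terms remain essential in $f'$ is also fine.

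The uniqueness half has a small gap. You correctly show that every term of an essential polynomial $g$ representing $f$ lies in the set $\mathcal{A}(f)$ of affine functions agreeing with $f$ on some nonempty open set, and then assert that the terms of $g$ are \emph{exactly} $\mathcal{A}(f)$; but the reverse inclusion is not argued. It is easy to supply: if $L \in \mathcal{A}(f)$, say $L = f$ on an open set $V$, then at every point of $V$ some term $L_i$ of $g$ equals $L$; since each locus $\{L_i = L\}$ with $L_i \neq L$ is a proper hyperplane, removing these from $V$ leaves a nonempty set on which some fixed $L_i$ agrees with $L$, hence $L_i = L$ identically and $L$ is a term of $g$. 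With this added, $\mathcal{A}(f)$ is intrinsic to the function and equals the term set of every essential representative, giving uniqueness. Your closing remark about $0$-flatness and boundary behaviour is unnecessary here: the lemma concerns arbitrary formal polynomials, and equivalence is already tested on $\ft^X = \mathbb{R}^X$, so no boundary analysis arises.
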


\begin{lemma}\label{lemma_findessential}
There is an algorithm which, given a formal tropical polynomial in $k$ variables with $m$ terms with one distinguished term, decides whether the distinguished term is essential in time polynomial in $k$ and $m$.
\end{lemma}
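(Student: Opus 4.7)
The plan is to reduce the essentiality question to a linear programming feasibility problem. Write the given polynomial as $\bigoplus_{j=0}^{m-1} c_j \otimes x_1^{a_{j,1}} \otimes \cdots \otimes x_k^{a_{j,k}}$ with coefficients $c_j \in \ft$ and the distinguished term indexed by $0$. Under the classical interpretation of $\otimes$ as $+$ and $\oplus$ as $\max$, each term $t_j$ evaluates at $\underline{x} \in \mathbb{R}^k$ to the affine expression $c_j + \sum_{i=1}^k a_{j,i}\, x_i$, so the statement "$t_0$ is the unique maximum at $\underline{x}$" is precisely the feasibility of the system of $m-1$ strict linear inequalities
\[
(c_0 - c_j) + \sum_{i=1}^k (a_{0,i} - a_{j,i})\, x_i \ > \ 0, \qquad j = 1, \dots, m-1,
\]
over $\underline{x} \in \mathbb{R}^k$.

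The first step is to reconcile the definition of essential (which a priori allows variable values in $\trop$, including $-\infty$) with this real-variable reduction. The case $m=1$ is trivial, so assume $m \geq 2$. Given a witness $\underline{x} \in \trop^k$ of essentiality, let $K = \{i : x_i = -\infty\}$; the support of $t_0$ must be disjoint from $K$, for otherwise $t_0(\underline{x}) = -\infty$ and no term can strictly exceed all others. Replacing each $-\infty$-coordinate by a sufficiently negative finite value $-N$ then preserves $t_0(\underline{x})$, drives every term involving a variable in $K$ (with strictly positive exponent) arbitrarily far below, and leaves all other term evaluations unchanged; for $N$ large enough the resulting point in $\mathbb{R}^k$ still realises strict maximality of $t_0$. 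Conversely, any witness in $\mathbb{R}^k$ is automatically a witness in $\trop^k$.

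The next step is to convert the strict system into a standard LP. Introduce a slack variable $\epsilon$ and maximise it subject to
\[
(c_0 - c_j) + \sum_{i=1}^k (a_{0,i} - a_{j,i})\, x_i \ \geq \ \epsilon, \qquad j = 1, \dots, m-1.
\]
The strict system is feasible if and only if this optimum is strictly positive (treating an unbounded optimum as positive). The resulting LP has $k+1$ variables and $m-1$ constraints whose numerical data come directly from those of the input polynomial, so invoking any polynomial-time linear programming algorithm (for instance the ellipsoid method of Khachiyan) yields a decision procedure running in time polynomial in $k$ and $m$ (and in the bit-size of the input data).

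The only delicate point I anticipate is the careful handling of the $\trop$-versus-$\mathbb{R}$ discrepancy in the first step; the LP reduction and the existence of polynomial-time LP solvers are entirely standard and can simply be cited.
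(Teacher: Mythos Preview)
Your proposal is correct and takes essentially the same approach as the paper: reduce essentiality of the distinguished term to feasibility of $m-1$ strict linear inequalities in $k$ real variables, then cite a polynomial-time linear programming algorithm. The paper's proof is a terse three sentences and simply asserts that real values suffice and that this is an LP instance; your version is more careful in explicitly justifying the passage from $\trop$-valued witnesses to $\mathbb{R}$-valued ones and in handling the strict-versus-nonstrict inequality issue via a slack variable, but these are refinements of the same argument rather than a different route.
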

\begin{proof}
It suffices to check if there are real values of the variables which make the distinguished term simultaneously exceed each of the other $m-1$ terms. Since each term of a tropical polynomial is a classical linear function, this is just a (classical) linear programming problem of checking the
solvability of $m-1$ linear inequalities in $k$ real variables, which is solvable in polynomial time (see for example \cite{Aspvall}).
\end{proof}

\begin{theorem}
Let $n \in \mathbb{N}$ be a fixed positive integer.
Then there is an algorithm which, given an identity $v=w$ over a finite alphabet $\Sigma$, decides in time polynomial in $|v|+|w|$ and $|\Sigma|$ whether the identity holds in $UT_n(\mathbb{T})$.
\end{theorem}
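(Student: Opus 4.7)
The plan is to invoke Theorem~\ref{thm_kletterGamma} with $\Gamma = \{1, \dots, n\}$ under the standard total order, which reduces the question of whether $v=w$ holds in $UT_n(\mathbb{T})$ to verifying, for every word $u \in \Sigma^*$ with $|u| \leq n-1$ and every $(|u|+1)$-vertex path $\rho$ in $\Gamma$, that the formal polynomials $f_{u,\rho}^w$ and $f_{u,\rho}^v$ are equivalent as functions.

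First I would estimate the data involved. Since $n$ is fixed, there are at most $n|\Sigma|^{n-1}$ words $u$ of length at most $n-1$, and at most $\binom{n}{|u|+1} \leq 2^{n}$ strictly increasing paths $\rho$ of the appropriate length in $\Gamma$, yielding a polynomial-in-$|\Sigma|$ number of polynomial pairs to compare. As already noted in the discussion preceding Lemma~\ref{lemma_entriesGamma}, each formal polynomial $f_{u,\rho}^w$ has at most $|w|^{n-1}$ monomials, one for each admissible choice of positions $0=\alpha_0<\alpha_1<\cdots<\alpha_{|u|}<\alpha_{|u|+1}=|w|+1$ with $w_{\alpha_k}=u_k$, and these monomials can be enumerated directly by iterating over such tuples. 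All the polynomials $f_{u,\rho}^w$ and $f_{u,\rho}^v$ can therefore be written down explicitly in time polynomial in $|v|+|w|$ and $|\Sigma|$.

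It remains to test equivalence of two given $0$-flat formal polynomials $f$ and $g$ of this size. By Lemma~\ref{lemma_essential} every formal tropical polynomial is equivalent to a unique essential polynomial, so $f$ and $g$ represent the same function if and only if they have the same set of essential terms. Lemma~\ref{lemma_findessential} provides a polynomial-time test to decide, for any individual term of a formal polynomial, whether it is essential; applying this test to every monomial of $f$ and of $g$ in turn produces the two essential representatives, which can then be compared directly by sorting. Running this procedure for each of the polynomially many pairs $(u,\rho)$ and accepting precisely when every comparison succeeds yields the desired algorithm.

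The one technical point I would watch carefully is the efficient enumeration of monomials in $f_{u,\rho}^w$: a naive expansion through iterated tropical multiplication would produce exponentially many intermediate terms, but directly enumerating positions of the scattered subword $u$ inside $w$ (and reading off the exponents $\beta_s^w(\alpha_k,\alpha_{k+1})$ from the counts between consecutive chosen positions) keeps both the polynomial size and the construction time under control. The exponential dependence on $n$ is intrinsic to this approach and is visible in the factor $|w|^{n-1}$ bounding the number of monomials; for fixed $n$ this is benign, and composing all the polynomial bounds above shows that the total running time is polynomial in $|v|+|w|$ and $|\Sigma|$.
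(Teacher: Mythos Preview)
Your proposal is correct and follows essentially the same approach as the paper's proof: reduce via Theorem~\ref{thm_kletterGamma} to comparing polynomially many pairs of formal polynomials $f_{u,\rho}^w$, $f_{u,\rho}^v$, compute each in polynomial time, use Lemma~\ref{lemma_findessential} to extract the essential terms, and then appeal to the uniqueness in Lemma~\ref{lemma_essential} to finish by a direct comparison. Your explicit bounds and the remark about enumerating scattered-subword positions rather than expanding products are helpful elaborations, but the underlying argument is the same.
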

\begin{proof}
Suppose we are given an identity $v=w$ where $v$ and $w$ are words over a $k$-letter alphabet $\Sigma$ and $|v| + |w| = m$. By
Theorem~\ref{thm_kletterGamma} it suffices to compute the formal polynomials $f_{u,\rho}^v$ and $f_{u,\rho}^w$ for $u \in \Sigma^*$ with
$|u| < n$ and $\rho$ a ($|u|$+1)-vertex path in the partially ordered set $\lbrace 1, \dots, n \rbrace$, and check whether each $f_{u,\rho}^v$ is equivalent to the corresponding $f_{u,\rho}^w$. The number of pairs of polynomials to consider is bounded above by the number of ways to choose
$u$ (which since $n$ is fixed is a polynomial function of $k=|\Sigma|$) times the number of paths of length $n$ or less in $\lbrace 1, \dots, n \rbrace$ (which since
$n$ is fixed is a constant). Moreover, it is easy to see from the definition that each tropical polynomial $f_s^w$ can be computed in time polynomial in
$|\Sigma|$ and $|w|$, and so in particular has polynomially many terms.

Now by Lemma~\ref{lemma_findessential} we may check in polynomial time which terms in our functions are essential; by discarding
the inessential terms we obtain, again in polynomial time, essential formal polynomials representing the same functions. By
Lemma~\ref{lemma_essential} it suffices to check if these are the same as formal polynomials, which again can clearly be done
in polynomial time.
\end{proof}

We emphasise that the algorithm runs in polynomial time only for a fixed semigroup $UT_n(\mathbb{T})$; allowing $n$ to grow results in exponential growth in the number of the pairs of formal polynomials which must be compared, so time complexity as a function of $n$ is exponential.

In the case $n=2$ we may slightly reduce the number of polynomials to be considered by using Theorem~\ref{thm_kletter}
in place of Theorem~\ref{thm_kletterGamma}. In the special case that $n = 2$ and $|\Sigma| = 2$ (hence for two-letter identities
over the bicyclic monoid) Corollary~\ref{cor_kletter} allows us to reduce the problem to checking equivalence of four pairs of \textit{single-variable} polynomial equations, each of size linear in $|v|+|w|$. Equivalence of single-variable tropical polynomials can be checked
even more efficiently: in linear time, presuming a random access model of computation with unit time arithmetic and comparison of numbers \cite[Algorithm 5.1.11]{Butkovic}. Thus, $2$-letter identities in $UT_2(\mathbb{T})$ (and hence, by Theorem \ref{main_theorem}, in $\mathcal{B}$) can be checked in linear time given in such a model of computation.


\begin{thebibliography}{22}
\bibitem{A66} S. I. Adjan, Defining relations and algorithmic problems for groups and semigroups. \emph{Proceedings of the Steklov Institute of Mathematics}, No. 85 (1966). (Translated from the Russian by M. Greendlinger, American Mathematical Society, Providence, R.I. 1966.)

\bibitem{Almeida} J. Almeida, M. V. Volkov, S. V.  Goldberg, Complexity of the identity checking problem for finite semigroups. J. Math. Sci. \textbf{158} (2009), no. 5, 605--614. (Translated from  Zapiski Nauchnykh Seminarov POMI, Vol. 358, 2008, 5--22.)

\bibitem{Aspvall} B. Aspvall, R. E. Stone, Khachiyan's Linear Programming Algorithm. \emph{Journal of Algorithms} \textbf{1} (1980), 1--13.

\bibitem{B35} G. Birkhoff, On the structure of abstract algebras. \emph{Mathematical Proceedings of the Cambridge Philosophical Society} \textbf{31} (1935), 433--454.

\bibitem{Butkovic} P. Butkovi{\v{c}}, Max-linear systems: theory and algorithms.  Springer Monographs in Mathematics, Springer, London, 2010.

\bibitem{Cain17} A. Cain, G. Klein, L. Kubat, A. Malheiro, J. Okni\'{n}ski, A
    note on identities in plactic monoids and monoids of upper-triangular tropical matrices. Preprint 2017, arXiv:1705.04596v1.

\bibitem{CHLS16} Y. Chen, X. Hu, Y. Luo, O. Sapir, The finite basis problem for the monoid of two-by-two upper triangular tropical matrices. \emph{Bulletin of the Australian Mathematical Society} \textbf{94} (2016), no. 1, 54--64.

\bibitem{dAP03} F. d'Alessandro, E. Pasku, A combinatorial property for semigroups of matrices. \emph{Semigroup Forum} \textbf{67} (2003), no. 1, 22--30.

\bibitem{DJ17} L. Daviaud, M. Johnson, Minimal identities in the semigroup of $2 \times 2$ tropical matrices. Proceedings of the 42nd International Symposium on Mathematical Foundations of Computer Science (MFCS 2017),  \emph{Leibniz International Proceedings in Informatics}, to appear.

\bibitem{Dja77} G. G. Djad\v{c}enko, Identities on monogenic inverse semigroups (in Russian). \emph{Algebra and Number Theory} \textbf{2} (1977), 57--77. 

\bibitem{G96} S. Gaubert, On the Burnside problem for semigroups of matrices in the (max,+) algebra. \emph{Semigroup Forum} \textbf{52} (1996), no. 3, 271--292.

\bibitem{G98} S. Gaubert, Two lectures on max-plus algebra. Proceedings of the 26th Spring School of Theoretical Computer Science \emph{Algèbres Max-Plus et applications en informatique et automatique} (1998).

\bibitem{GGP08} M. Gehrke, S. Grigorieff, J.-{\'E}. Pin, Duality and equational theory of regular languages.  Proceedings of the 35th International Colloquium on Automata, Languages and Programming (ICALP 2008, Part II), in \textit{Lecture Notes in Computer Science} vol. 5126, L. Aceto et al. (eds.), Springer, Berlin, 2008, pp. 246--257.
\bibitem{G08} V. Gould, Axiomatisability of free, projective and flat S-acts. Proceedings of Semigroups, acts and categories with applications to graphs (Tartu), in  \emph{Mathematics Studies of the Estonian Mathematical Society} \textbf{3} (2008), 41--56.

\bibitem{HK12} C. Hollings, M. Kambites, Tropical matrix duality and Green's $\D$ relation. {\em Journal of the London Mathematical Society}  {\bf 86} (2012), no. 2, 520--538.

\bibitem{Horvath} G. Horv\'{a}th, J. Lawrence, L. M\'{e}rai, C. Szab\'{o}, The complexity of the equivalence problem for nonsolvable groups.  { \em Bull. Lond. Math. Soc.} \textbf{39} (2007), no. 3, 433--438.

\bibitem{H95} J. M. Howie, Fundamentals of Semigroup Theory. London Mathematical Society Monographs (New Series) 12, Oxford University Press, New York, 1995.

\bibitem{I14} Z. Izhakian, Semigroup identities in the monoid of triangular tropical matrices. \emph{Semigroup Forum} \textbf{88} (2014), no. 1, 145--161.

\bibitem{I14Erratum} Z. Izhakian, Erratum to: Semigroup identities in the monoid of triangular tropical matrices. \emph{Semigroup Forum} \textbf{92} (2016), no. 3, 733.
\bibitem{I16} Z. Izhakian, Semigroup identities of tropical matrix semigroups of maximal rank. \emph{Semigroup Forum} \textbf{92} (2016), no. 3, 712--732.

\bibitem{IJK16} Z. Izhakian, M. Johnson, M. Kambites, Pure dimension and projectivity of tropical polytopes. \emph{Advances in Mathematics} \textbf{303} (2016), 1236--1263.

\bibitem{IJK17} Z. Izhakian, M. Johnson, M. Kambites, Tropical matrix groups. \emph{Semigroup Forum}, to appear.

\bibitem{IM10} Z. Izhakian, S. Margolis, Semigroup identities in the monoid of two-by-two tropical matrices. \emph{Semigroup Forum} \textbf{80} (2010), no. 2, 191--218.

\bibitem{JK11}  M. Johnson, M. Kambites, Multiplicative structure of $2 \times 2$ tropical matrices. \emph{Linear Algebra and its Applications}, \textbf{435} (2011), 1612--1625.

\bibitem{JK13}  M. Johnson, M. Kambites, Green's $\mathcal{J}$-order and the rank of tropical matrices. \emph{Journal of Pure and Applied Algebra}, \textbf{217} (2013), 280--292.

\bibitem{J87} P. R. Jones, Analogues of the bicyclic semigroup in simple semigroups without idempotents.
\emph{Proceedings of the Royal Society of Edinburgh Section A} \textbf{106} (1987), no. 1-2, 11--24.

\bibitem{M10} G. Merlet, Semigroup of matrices acting on the max-plus projective space. \emph{Linear Algebra and its Applications} \textbf{432} (2010), no. 8, 1923--1935.

\bibitem{Murskii} V. L. Murskii, Examples of varieties of semigroups, Math. Notes, Vol. 3, no. 6, (1968), 423--427.

\bibitem{O15} J. Okni\'{n}ski, Identities of the semigroup of upper triangular tropical matrices. \emph{Communications in Algebra} \textbf{43} (2015), no. 10, 4422--4426.

\bibitem{P06}  F. Pastijn, Polyhedral convex cones and the equational theory of the bicyclic semigroup. \emph{Journal of the Australian Mathematical Society} \textbf{81} (2006), no. 1, 63--96.

\bibitem{Reiterman82} J. Reiterman, The Birkhoff theorem for finite algebras. \emph{Algebra Universalis} \textbf{14}, (1982), 1--10.

\bibitem{Sch71} H. E. Scheiblich, A characterization of a free elementary inverse semigroup. \emph{Semigroup Forum} \textbf{2} (1971), no. 1, 76--79.

\bibitem{Schutz61} M-P. Sch{\"u}tzenberger, On the definition of a family of automata. \emph{Information and Control} \textbf{4} (1961), 245--270.

\bibitem{Shi14} Y. Shitov, A semigroup identity for $3 \times 3$ matrices. \emph{Ars Mathematica Contemperanea}, \textbf{14} (2018), 15--23.

\bibitem{Shl90} F. G. Shleifer, Looking for identities on a bicyclic semigroup with computer assistance. \emph{Semigroup Forum} \textbf{41} (1990), no. 2, 173--179.

\bibitem{Shn89} L. M. Shneerson, On the axiomatic rank of varieties generated by a semigroup or monoid with one defining relation. \emph{Semigroup Forum} \textbf{39} (1989), no. 1, 17--38.

\bibitem{Shn15} L. M. Shneerson, On growth, identities and free subsemigroups for inverse semigroups of deficiency one. \emph{International Journal of Algebra and Computation} \textbf{25} (2015), no. 1-2, 233--258.

\bibitem{SLS13} G. B. Shpiz, G. L. Litvinov, S. N. Sergeev,  On common eigenvectors for semigroups of matrices in tropical and traditional linear algebra. \emph{Linear Algebra and its Applications} \textbf{439} (2013), no. 6, 1651--1656.

\bibitem{TaylorThesis} M. Taylor, \textit{On upper triangular tropical matrix semigroups, tropical matrix identities and $\trop$-modules}. PhD Thesis, University of Manchester, 2017.

\bibitem{W68} R. J. Warne, $I$-bisimple semigroups. \emph{Transactions of the American Mathematical Society} \textbf{130} (1968) 367--386.
\end{thebibliography}
\end{document}